\newtheorem{theorem}{Theorem}[section]
\newtheorem{proposition}[theorem]{Proposition}
\theoremstyle{definition}
\newtheorem{definition}[theorem]{Definition}
\theoremstyle{remark}
\newtheorem{remark}[theorem]{Remark}
\title{\textbf{A Unified Spectral Framework for Aging, Heterogeneous, and Distributed Order Systems via Weighted Weyl-Sonine Operators}}
\author{
    \textbf{Gustavo Dorrego} \\
    \small{Department of Mathematics, Facultad de Ciencias Exactas y Naturales y Agrimensura}\\
    \small{Universidad Nacional del Nordeste, Corrientes, Argentina}\\
    \small{\texttt{gadorrego@exa.unne.edu.ar}}
}
\date{\today}
\begin{document}

\maketitle

\begin{abstract}
While General Fractional Calculus has successfully expanded the scope of memory operators beyond power-laws, standard formulations remain predominantly restricted to the half-line via Riemann-Liouville or Caputo definitions. This constraint artificially truncates the system's history, limiting the thermodynamic consistency required for modeling processes on unbounded domains. To overcome these barriers, we construct the \textbf{Weighted Weyl-Sonine Framework}, a generalized formalism that extends non-local theory to the entire real line without history truncation.

Unlike recent algebraic approaches based on conjugation for finite intervals, we develop a rigorous harmonic analysis framework. Our central contribution is the \textbf{Generalized Spectral Mapping Theorem}, which establishes the Weighted Fourier Transform as a unitary diagonalization map for these operators. This result allows us to rigorously classify and solve distinct physical regimes under a single algebraic structure. We explicitly derive exact solutions for \textit{diffusive relaxation} (governed by Complete Bernstein Functions), \textit{inertial wave propagation} (exhibiting oscillatory dynamics), and \textit{retarded aging} (via distributed order), proving that our framework unifies the description of anomalous transport and wave mechanics in complex, time-deformed media.
\end{abstract}\vspace{0.5cm}
    \noindent 
    
    \textbf{Keywords:} Weighted fractional calculus; Sonine kernels; Spectral analysis; Bernstein functions; Distributed order; Aging systems.
    \noindent
    
    \textbf{MSC 2020:} 26A33, 42A38, 47G10.

\section{Introduction}
\label{sec:intro}

The framework of Fractional Calculus has firmly established itself as a fundamental tool for modeling anomalous dynamics in complex systems. Traditionally, operators based on the Riemann-Liouville or Caputo definitions have been the gold standard for describing processes with power-law memory. These operators have successfully captured phenomena in linear viscoelasticity \cite{Bagley1983, Mainardi2010}, anomalous transport and diffusion \cite{Metzler2000}, and dielectric relaxation processes \cite{Hilfer2000, Cole1941}. The ubiquity of the power-law kernel $k(t) \propto t^{-\alpha}$ arises from its scale-invariance, which naturally models the self-similar fractal structures inherent to these media \cite{West2003, Nigmatullin1986}.

However, experimental evidence in highly heterogeneous materials—such as biological tissues, amorphous polymers, and porous media—suggests that memory effects are often far more intricate. In these systems, relaxation rates may evolve over time due to structural changes (``aging''), or the memory function may exhibit transitions between different decay regimes. For instance, in aging polymers or hardening concrete, the system's response depends explicitly on its age, violating the time-translation invariance inherent to classical convolution operators \cite{Sun2019, Sokolov2012}. Furthermore, complex porous media often exhibit ultra-slow diffusion best described by distributed orders \cite{Meerschaert2006}, while inertial viscoelasticity can lead to oscillating relaxation dynamics that monotonic power-laws fail to capture \cite{Mainardi2014}.

To address these limitations, the theory of General Fractional Calculus (GFC) has emerged as a rigorous generalization, replacing the standard power-law with arbitrary kernels $k(t)$ satisfying the Sonine condition \cite{Luchko2021}. Yet, a fundamental question remains regarding the domain of these operators. As eloquently argued by Stinga \cite{Stinga}, true ``memory effects''—likened to an elephant never forgetting its complete history—are best captured by operators of the Weyl-Marchaud type defined on the entire real line $\mathbb{R}$. Restricting analysis to $t>0$ artificially truncates the system's history, whereas a complete thermodynamic description requires integrating over the infinite past $(-\infty, t)$.

Recently, significant strides have been made in understanding the structure of these generalized operators. A major breakthrough was achieved by Al-Refai and Fernandez \cite{Fernandez2023}, who provided a unified algebraic framework for Sonine operators via the method of conjugations. Their work elegantly demonstrates that many weighted operators on the half-line $[0, \infty)$ can be constructed by conjugating standard derivatives with invertible linear operators, effectively unifying a wide range of fractional definitions under a common algebraic umbrella. However, their approach is primarily tailored to initial value problems using the Laplace transform on semi-axes. Consequently, the \textit{spectral analysis} of ``full history'' operators on the unbounded domain $\mathbb{R}$—particularly those combining general Sonine kernels with time-dependent weights via the Fourier transform—remains an open problem. While abstract approaches to evolution equations have recently appeared (e.g., Lizama and Murillo-Arcila, 2025 \cite{Lizama2025}), an explicit harmonic analysis framework allowing for concrete calculations in these heterogeneous settings is still lacking.

This gap has led to a fragmentation of the field into a ``zoo'' of disconnected operators. Researchers must often choose between the power-law memory of Riemann-Liouville, the logarithmic scaling of Hadamard, the non-singular kernels of Caputo-Fabrizio, or tempered behaviors. While the Generalized Fractional Calculus provides the algebraic structure for arbitrary kernels, the physical modeling of aging viscoelasticity, inertial oscillation, and ultra-slow diffusion demands a framework that simultaneously handles general memory geometry and time-domain heterogeneity.

In a recent study \cite{Dorrego2026}, we laid the foundations for a unified approach by constructing weighted Weyl derivatives for the specific case of power-law kernels. In this work, we significantly extend those results by constructing the \textbf{Weighted Weyl-Sonine Operators}. This class acts as a ``master framework'' that generalizes operators of differentiation and integration by incorporating three independent degrees of freedom:
\begin{itemize}
    \item The \textbf{Sonine kernel} $k(z)$, which determines the relaxation spectrum (e.g., monotonic vs. oscillatory).
    \item The \textbf{scale function} $\psi(t)$, which determines the domain geometry (e.g., invariance of scale vs. translation).
    \item The \textbf{weight function} $\omega(t)$, which controls aging or tempering effects.
\end{itemize}

Our main contribution is a \textbf{Generalized Spectral Mapping Theorem}, which proves that the Weighted Fourier Transform acts as a universal diagonalization map for these complex operators. This result is non-trivial, as it requires handling the interplay between the Sonine convolution condition and non-translation-invariant weight functions.

The paper is organized as follows: Section \ref{sec:preliminaries} reviews the preliminaries. Section \ref{sec:weyl_sonine_framework} constructs the operators and proves the Spectral Mapping Theorem. Finally, Section 4 applies this theory to two distinct cases: a generalized relaxation equation with oscillating memory (Bessel kernel) and the problem of distributed-order evolution in aging media, deriving explicit solutions via the spectral method.
\section{Preliminaries and Functional Framework}
\label{sec:preliminaries}

In this section, we summarize the necessary functional spaces and integral transform tools developed in \cite{DorregoPaper1}, and we recall the fundamental properties of Sonine kernels \cite{Luchko2021, Luchko2021+}.

\subsection{General Fractional Calculus with Sonine Kernels}
\label{subsec:luchko_prelims}

Following the framework established by Luchko \cite{Luchko2021, Luchko2021+}, we first review the construction of general fractional operators on the semi-axis $\mathbb{R}_+$. This serves as the local foundation for our non-local Weyl operators.


\begin{definition}[Space $C_{-1}$]
\label{def:space_C_minus1}
Let $C_{-1}(0, \infty)$ be the space of functions $f$ defined on $(0, \infty)$ such that $f(t) = t^p g(t)$ for some real number $p > -1$ and $g \in C[0, \infty)$. This space contains both functions continuous at the origin (if $p \ge 0$) and functions with integrable singularities (if $-1 < p < 0$).
\end{definition}


The core of this theory relies on the so-called \textit{Sonine condition}, which generalizes the relation between the power-law kernels of the Riemann-Liouville calculus.

\begin{definition}[The Sonine Condition]
    \label{def:sonine_condition}
    Let $k, \kappa \in C_{-1}(0, \infty)$ be two real-valued kernels. The pair $(k, \kappa)$ is said to satisfy the Sonine condition if their Dirichlet convolution is identically unity:
    \begin{equation}
        (k * \kappa)(t) = \int_0^t k(t-\tau)\kappa(\tau) \, d\tau = 1, \quad \forall t > 0.
    \end{equation}
    The set of all such pairs $\mathcal{K}$ is called the set of \textit{Sonine kernels}.
\end{definition}

Based on these kernels, Luchko defines the General Fractional Integral (GFI) and Derivative (GFD) which unify previous fractional operators.

\begin{definition}[General Fractional Operators]
    For a kernel $k$ satisfying the Sonine condition, the General Fractional Integral denoted by $\mathcal{I}_{(k)}$ is defined as the convolution:
    \begin{equation}
        \mathcal{I}_{(k)} f(t) := (k * f)(t) = \int_0^t k(t-\tau) f(\tau) \, d\tau, \quad t > 0.
    \end{equation}
    The associated General Fractional Derivative (GFD) of regularized type, denoted by $\mathbb{D}_{(\kappa)}$, is defined using the associated kernel $\kappa$:
    \begin{equation}
        \mathbb{D}_{(\kappa)} f(t) := \frac{d}{dt} \int_0^t \kappa(t-\tau) f(\tau) \, d\tau - \kappa(t)f(0).
        \label{eq:GFD_Luchko}
    \end{equation}
\end{definition}


\begin{definition}[Generalized Sonine Pairs of Arbitrary Order]
\label{def:gen_sonine_luchko}
Following the framework introduced by Luchko \cite{Luchko2021+}, let $n \in \mathbb{N}$ be a natural number. We say that a kernel $\kappa$ belongs to the set of generalized Sonine kernels $\mathcal{L}_n$ if there exists an associated kernel $k$ such that the pair $(\kappa, k)$ satisfies the generalized convolution condition:
\begin{equation}
    (\kappa * k)(t) = \{1\}^{*n}(t) = \frac{t^{n-1}}{(n-1)!}, \quad \forall t > 0,
\end{equation}
where $\{1\}^{*n}(t)$ denotes the $n$-fold convolution of the unit function.
\end{definition}

\begin{remark}
For $n=1$, this recovers the classical Sonine condition. For $n > 1$, this definition allows for the construction of General Fractional Derivatives of arbitrary order, defined by Luchko as:
\begin{equation}
    \mathbb{D}_{(k)} f(t) = \frac{d^n}{dt^n} (k * f)(t) - \sum_{j=0}^{n-1} f^{(j)}(0) \frac{d^n}{dt^n} (k * \{1\}^{*(j+1)})(t).
\end{equation}
This formulation is essential for modeling high-order distributed systems where the memory kernel does not necessarily behave as a singular power law near the origin.
\end{remark}

\begin{remark}[Transition to Weighted Weyl Operators]
    In our work, we extend these concepts to the \textit{entire real line} (Weyl type) and introduce time-dependent weights. While Luchko's operators $\mathbb{D}_{(\kappa)}$ are defined for $t>0$ and involve initial conditions ($f(0)$), our Weighted Weyl operators act on the complete history $t \in (-\infty, \infty)$ and naturally vanish at $-\infty$ for functions in the Schwartz space, removing the need for initial value terms while preserving the algebraic structure of the Sonine pairs.
\end{remark}

\subsection{Weighted Spaces and Conjugation}

Let $\omega \in C^\infty(\mathbb{R})$ be a strictly positive weight function and let $\psi \in C^\infty(\mathbb{R})$ be a time-scale function satisfying the following conditions:
\begin{enumerate}
    \item \textbf{Surjectivity:} $\psi(\mathbb{R}) = \mathbb{R}$ (covering the full timeline).
    \item \textbf{Non-degeneracy:} $\psi'(t) > 0$ for all $t \in \mathbb{R}$.
\end{enumerate}

The condition $\psi(\mathbb{R}) = \mathbb{R}$ ensures that the induced spectral domain covers the entire frequency line $\xi \in \mathbb{R}$, while $\psi'(t) > 0$ guarantees that the change of variables is a global diffeomorphism, preventing singularities in the Jacobian.

We consider the weighted $L^2$-space denoted by $L^2_{\psi,\omega}(\mathbb{R})$, equipped with the inner product:
\begin{equation}
    \langle u, v \rangle_{\psi,\omega} = \int_{-\infty}^{\infty} u(t) \overline{v(t)} \omega(t) \psi'(t) \, dt.
\end{equation}

To handle the heterogeneity of the medium, we utilize the conjugation map approach proposed by Fernandez and Fahad \cite{Fernandez2022}. We define the isometric isomorphism $\mathcal{T}_{\psi,\omega}: L^2_{\psi,\omega}(\mathbb{R}) \to L^2(\mathbb{R})$ as:
\begin{equation}
    \label{eq:isometry_T}
    (\mathcal{T}_{\psi,\omega} u)(\xi) = \left( u \circ \psi^{-1} \right)(\xi) \cdot \left( \omega \circ \psi^{-1} \right)(\xi).
\end{equation}
This operator acts as a "rectifier," mapping functions from the deformed, heterogeneous time-space to the homogeneous standard space.

\subsection{The Weighted Fourier Transform}

To diagonalize operators with time-dependent coefficients, we employ the Weighted Fourier Transform constructed in our previous work \cite{DorregoWFT}. While the general theory was developed for $\mathbb{R}^n$, here we present the one-dimensional formulation adapted to the global time domain $\mathbb{R}$.

We introduce the weighted Lebesgue spaces, denoted by $L^p_{\psi,\omega}(\mathbb{R})$, defined as the completion of $C_c^\infty(\mathbb{R})$ with respect to the norm associated with the weighted measure $d\mu_{\psi,\omega}(t) = \omega(t)\psi'(t)dt$:
\begin{equation}
    \|u\|_{L^p_{\psi,\omega}} := \left( \int_{\mathbb{R}} |u(t)|^p \omega(t) \psi'(t) \, dt \right)^{1/p} < \infty.
\end{equation}

\begin{definition} \label{def:weighted_FT_1D}
For a function $f \in L_{\psi,\omega}^{1}(\mathbb{R})$, the Weighted Fourier Transform with respect to the scale $\psi$ and weight $\omega$, denoted by $\mathcal{F}_{\psi,\omega}$, is defined as:
\begin{equation} \label{eq:weighted_FT_formula_1D}
    [\mathcal{F}_{\psi,\omega}f](\xi) = \frac{1}{\sqrt{2\pi}} \int_{-\infty}^{\infty} e^{-i\xi \psi(t)} \omega(t) f(t) \psi'(t) \, dt.
\end{equation}
\end{definition}

Analytically, this operator acts as a conjugation of the classical Fourier transform $\mathcal{F}$ by structural isomorphisms. Specifically, if we define the composition operator $Q_{\psi} g(t) = g(\psi(t))$ and the multiplication operator $M_{\omega} f(t) = \omega(t)f(t)$, the definition \eqref{eq:weighted_FT_formula_1D} admits the factorization:
\begin{equation}
    \mathcal{F}_{\psi,\omega} = \mathcal{F} \circ Q_{\psi}^{-1} \circ M_{\omega}.
\end{equation}
This factorization is crucial, as it transfers the unitarity and inversion properties of the classical transform directly to the weighted setting via the induced isometry. The inverse transform allows for the recovery of the time-domain solution.

\begin{theorem}[Inversion Formula]
Let $f$ be a function such that both $f$ and $\mathcal{F}_{\psi,\omega}f$ are integrable in the appropriate weighted sense. Then, the inverse transform is given pointwise by:
\begin{equation}
    f(t) = \frac{1}{\omega(t)\sqrt{2\pi}} \int_{-\infty}^{\infty} e^{i \xi \psi(t)} [\mathcal{F}_{\psi,\omega} f](\xi) \, d\xi.
\end{equation}
\end{theorem}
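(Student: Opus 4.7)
The plan is to leverage the operator factorization $\mathcal{F}_{\psi,\omega} = \mathcal{F} \circ Q_{\psi}^{-1} \circ M_{\omega}$ stated just before the theorem, reducing the entire question to the classical $L^1$ Fourier inversion theorem. Since $M_{\omega}$ and $Q_{\psi}^{-1}$ are bijections (with formal inverses $M_{\omega}^{-1} = M_{1/\omega}$ and $Q_{\psi}$, the latter well-defined thanks to the diffeomorphism assumption on $\psi$), the candidate inverse should be
$$\mathcal{F}_{\psi,\omega}^{-1} = M_{1/\omega} \circ Q_{\psi} \circ \mathcal{F}^{-1},$$
and the task is to verify that unfolding this operator on $\mathcal{F}_{\psi,\omega} f$ recovers the stated integral representation.

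First I would set $h(\xi) := (Q_{\psi}^{-1} M_{\omega} f)(\xi) = \omega(\psi^{-1}(\xi))\, f(\psi^{-1}(\xi))$ and verify that $h \in L^1(\mathbb{R})$ via the change of variables $\xi = \psi(t)$: because $\psi'(t)>0$ globally and $\psi:\mathbb{R}\to\mathbb{R}$ is a diffeomorphism, one obtains the isometry $\|h\|_{L^1(\mathbb{R})} = \|f\|_{L^1_{\psi,\omega}(\mathbb{R})}$, finite by hypothesis. By the very definition of $\mathcal{F}_{\psi,\omega}$, the identity $(\mathcal{F}_{\psi,\omega} f)(\xi) = (\mathcal{F} h)(\xi)$ holds, so the classical Fourier transform $\hat h$ equals $\mathcal{F}_{\psi,\omega} f$ and is integrable in the classical sense by hypothesis.

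Next I would apply the classical $L^1$ inversion theorem to $h$: since both $h$ and $\hat h$ lie in $L^1(\mathbb{R})$, we have pointwise almost everywhere (and at every continuity point of $h$)
$$h(s) = \frac{1}{\sqrt{2\pi}} \int_{-\infty}^{\infty} e^{i\xi s}\, \hat h(\xi)\, d\xi.$$
Evaluating at $s = \psi(t)$ and using $h(\psi(t)) = \omega(t) f(t)$ together with $\hat h = \mathcal{F}_{\psi,\omega} f$ yields
$$\omega(t)\, f(t) = \frac{1}{\sqrt{2\pi}} \int_{-\infty}^{\infty} e^{i\xi \psi(t)}\, [\mathcal{F}_{\psi,\omega} f](\xi)\, d\xi,$$
and dividing by the strictly positive weight $\omega(t)$ delivers the claimed inversion formula.

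The main obstacle is not algebraic but analytic: one must make precise the phrase \emph{``integrable in the appropriate weighted sense''} so that $h$ inherits the regularity required by the classical inversion theorem, and one must specify in what sense (a.e., at every Lebesgue point, or everywhere) the pointwise equality is to be understood. The cleanest formulation is to assume $f \in L^1_{\psi,\omega}(\mathbb{R})$ together with $\mathcal{F}_{\psi,\omega} f \in L^1(\mathbb{R})$ in the classical (unweighted) sense; then the classical theorem applies and equality holds a.e., upgrading to everywhere whenever $f$ is continuous. All remaining content is a direct transport of the classical result across the isometric change of variables induced by $\psi$ and $\omega$.
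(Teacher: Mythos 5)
Your proposal is correct and follows essentially the same route as the paper: the paper's proof also reduces the statement, via the substitution $u=\psi(t)$ (equivalently the factorization $\mathcal{F}_{\psi,\omega}=\mathcal{F}\circ Q_{\psi}^{-1}\circ M_{\omega}$), to the classical $L^1$ Fourier inversion theorem, merely citing the $n$-dimensional version from the authors' earlier work rather than writing out the details. Your version is in fact more self-contained, since it makes explicit the integrability transfer $\|h\|_{L^1(\mathbb{R})}=\|f\|_{L^1_{\psi,\omega}(\mathbb{R})}$ and the a.e.\ versus everywhere sense of the pointwise equality, which the paper leaves implicit.
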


\begin{proof}
The proof follows directly from the $n$-dimensional case presented in \cite{DorregoWFT} by setting $n=1$, $\phi(x) = \psi(t)$, and noting that the Jacobian determinant $|J_{\phi}(x)|$ reduces to $\psi'(t)$ since $\psi$ is strictly increasing. By the change of variables $u = \psi(t)$, the integral reduces to the classical Fourier inversion theorem.
\end{proof}

\begin{remark}
A fundamental property of $\mathcal{F}_{\psi,\omega}$ is that it extends to a unitary operator (isometry) on $L^2_{\psi, \omega}(\mathbb{R})$ under suitable conditions on $\omega$. This generalized Plancherel identity (see \cite{DorregoWFT}) is essential for the energy estimates derived in Section 4.
\end{remark}

\begin{definition}[Weighted Schwartz Space as a Fréchet Space]
\label{def:weighted_schwartz}
Let $\psi: \mathbb{R} \to \mathbb{R}$ be a smooth diffeomorphism and let $\omega \in C^\infty(\mathbb{R})$ be a non-vanishing weight. The \textit{Weighted Schwartz Space}, denoted by $\mathcal{S}_{\psi, \omega}(\mathbb{R})$, is defined as the isomorphic image of the standard Schwartz space $\mathcal{S}(\mathbb{R})$ under the mapping $\mathcal{T}_{\psi, \omega} = M_\omega^{-1} \circ Q_\psi$:
\begin{equation}
    \mathcal{S}_{\psi, \omega}(\mathbb{R}) := \left\{ u : \mathbb{R} \to \mathbb{C} \mid \exists f \in \mathcal{S}(\mathbb{R}) \text{ such that } u(t) = \frac{f(\psi(t))}{\omega(t)} \right\}.
\end{equation}
We equip $\mathcal{S}_{\psi, \omega}(\mathbb{R})$ with the locally convex topology induced by this bijection. Specifically, convergence $u_n \to u$ in $\mathcal{S}_{\psi, \omega}$ corresponds to the convergence of the pull-backs $f_n = \omega \cdot (u_n \circ \psi^{-1})$ in the standard Fréchet topology of $\mathcal{S}(\mathbb{R})$.
\end{definition}

\begin{proposition}[Density and Boundary Decay]
\label{prop:density_boundary}
The space $\mathcal{S}_{\psi, \omega}(\mathbb{R})$ is dense in the weighted Hilbert space $L^2_{\psi, \omega}(\mathbb{R})$. Furthermore, the rapid decay inherited from $\mathcal{S}(\mathbb{R})$ ensures that for any $u \in \mathcal{S}_{\psi, \omega}(\mathbb{R})$, both the function and its weighted derivatives vanish at infinity:
\begin{equation}
    \lim_{t \to \pm \infty} u(t) = 0 \quad \text{and} \quad \lim_{t \to \pm \infty} \mathfrak{D}_{\psi, \omega}^{n} u(t) = 0 \quad \forall n \in \mathbb{N},
\end{equation}
where $\mathfrak{D}_{\psi, \omega}^{n}$ denotes the iterated weighted differential operator defined by:
\begin{equation}\label{eq:nth_order_def}
    \mathfrak{D}_{\psi, \omega}^{n} u(t) := \frac{1}{\omega(t)} \left( \frac{1}{\psi'(t)} \frac{d}{dt} \right)^n \big( \omega(t) u(t) \big).
\end{equation}
\end{proposition}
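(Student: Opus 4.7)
The proof strategy rests on the defining bijection $u \leftrightarrow f$ with $u(t) = f(\psi(t))/\omega(t)$ and $f \in \mathcal{S}(\mathbb{R})$, which by construction identifies $\mathcal{S}_{\psi,\omega}(\mathbb{R})$ with the classical Schwartz space. All three assertions of the proposition are reformulated as standard facts about $f$ and transported back via this identification.

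For density, I would first observe that the change of variable $\xi = \psi(t)$, combined with $\psi' > 0$ and $\psi(\mathbb{R}) = \mathbb{R}$, induces an isometric isomorphism between $L^2_{\psi,\omega}(\mathbb{R})$ and the unweighted $L^2(\mathbb{R})$ (up to the appropriate $\sqrt{\omega}$ normalization of the pull-back, which is already embedded in the weighted inner product). Since $\mathcal{S}(\mathbb{R})$ is dense in $L^2(\mathbb{R})$ by classical approximation, pulling this density back through the inverse isometry yields the density of $\mathcal{S}_{\psi,\omega}(\mathbb{R})$ in $L^2_{\psi,\omega}(\mathbb{R})$.

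For the boundary decay of $u$, the surjectivity and strict monotonicity of $\psi$ force $\psi(t) \to \pm\infty$ as $t \to \pm\infty$, so $f(\psi(t)) \to 0$ faster than any inverse polynomial in $\psi(t)$. Provided that $1/\omega$ is of at most polynomial growth at infinity — an admissibility condition implicit in requiring the topology of $\mathcal{S}_{\psi,\omega}$ to be the pull-back of the Schwartz topology — the quotient $u(t) = f(\psi(t))/\omega(t)$ vanishes at $\pm\infty$. The iterated derivative case then follows from the algebraic identity $\omega(t) u(t) = f(\psi(t))$ built into the very definition of $\mathcal{S}_{\psi,\omega}$, together with the chain-rule computation
\begin{equation*}
\left(\tfrac{1}{\psi'(t)} \tfrac{d}{dt}\right) f(\psi(t)) \;=\; f'(\psi(t)),
\end{equation*}
which, iterated by induction on $n$, yields
\begin{equation*}
\mathfrak{D}_{\psi,\omega}^{n} u(t) \;=\; \frac{f^{(n)}(\psi(t))}{\omega(t)}.
\end{equation*}
Since $\mathcal{S}(\mathbb{R})$ is closed under differentiation, $f^{(n)} \in \mathcal{S}(\mathbb{R})$, and the boundary-decay argument applies verbatim for each $n$.

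The part I expect to require the most care is not the chain rule — which is completely formal — but the precise weight-admissibility hypothesis on $\omega$ required to guarantee that $1/\omega$ does not overwhelm the Schwartz decay of $f^{(n)} \circ \psi$ at infinity. I would state this assumption explicitly (for instance, $\omega^{-1}$ of at most polynomial growth, or more generally dominated by a Schwartz-compatible envelope) and apply it uniformly in the decay steps. With this mild additional hypothesis the entire proposition reduces to three classical facts about $\mathcal{S}(\mathbb{R})$: its density in $L^2$, the vanishing of Schwartz functions at infinity, and its stability under differentiation.
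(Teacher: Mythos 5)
Your proposal is correct and follows essentially the same route as the paper: density is obtained by pushing the density of $\mathcal{S}(\mathbb{R})$ in $L^2(\mathbb{R})$ through the unitary conjugation $\mathcal{T}_{\psi,\omega}=M_\omega^{-1}Q_\psi$, and the decay claims reduce via the chain rule to the identity $\omega(t)\,\mathfrak{D}_{\psi,\omega}^{n}u(t)=f^{(n)}(\psi(t))$ with $f^{(n)}\in\mathcal{S}(\mathbb{R})$. Your explicit isolation of the hypothesis that $\omega^{-1}$ be dominated by a Schwartz-compatible envelope is, if anything, a slightly sharper statement of the admissibility condition the paper invokes informally when it assumes the weight ``does not grow superexponentially relative to the decay of $f$.''
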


\begin{proof}
The proof relies on the unitary equivalence established by the mapping $\mathcal{T}_{\psi, \omega}$.

\textbf{1. Density:}
Recall that the standard Schwartz space $\mathcal{S}(\mathbb{R})$ is dense in $L^2(\mathbb{R})$. Since the operator $\mathcal{T}_{\psi, \omega} = M_\omega^{-1} Q_\psi$ is a unitary isomorphism from $L^2(\mathbb{R})$ to $L^2_{\psi, \omega}(\mathbb{R})$ (preserving the inner product structure), it maps dense sets to dense sets. Therefore, the image $\mathcal{S}_{\psi, \omega}(\mathbb{R}) = \mathcal{T}_{\psi, \omega}(\mathcal{S}(\mathbb{R}))$ is necessarily dense in $L^2_{\psi, \omega}(\mathbb{R})$.

\textbf{2. Boundary Decay:}
Let $u \in \mathcal{S}_{\psi, \omega}(\mathbb{R})$. By definition, there exists a unique $f \in \mathcal{S}(\mathbb{R})$ such that $u(t) = f(\psi(t))/\omega(t)$, or equivalently $\omega(t)u(t) = f(\psi(t))$.
Substituting this into the definition of the weighted derivative \eqref{eq:nth_order_def}, we apply the chain rule:
$$
\mathfrak{D}_{\psi, \omega}^{n} u(t) = \frac{1}{\omega(t)} \left( \frac{1}{\psi'(t)} \frac{d}{dt} \right)^n f(\psi(t)).
$$
Let $y = \psi(t)$. Since $\frac{1}{\psi'(t)} \frac{d}{dt} = \frac{d}{dy}$, the operator simplifies strictly to the classical derivative in the transformed domain:
$$
\left( \frac{1}{\psi'(t)} \frac{d}{dt} \right)^n f(\psi(t)) = f^{(n)}(y) \Big|_{y=\psi(t)}.
$$
Thus, we have the relation:
$$
\omega(t) \cdot \mathfrak{D}_{\psi, \omega}^{n} u(t) = f^{(n)}(\psi(t)).
$$
Since $f \in \mathcal{S}(\mathbb{R})$, its derivatives $f^{(n)}$ decay rapidly at infinity. Assuming the weight $\omega(t)$ does not grow superexponentially relative to the decay of $f$ (a condition satisfied by admissible weights), the term on the RHS vanishes as $\psi(t) \to \pm \infty$. Consequently, $\mathfrak{D}_{\psi, \omega}^{n} u(t) \to 0$ as $t \to \pm \infty$.
\end{proof}

\section{The Weighted Weyl-Sonine Framework and Spectral Analysis}
\label{sec:weyl_sonine_framework}

In this section, we construct the unified operator. To ensure the convergence of integrals over the history $(-\infty, t]$, we must adapt the classical Sonine conditions (typically defined on bounded intervals or $\mathbb{R}_+$) to the context of Weyl operators on the entire real line.

\subsection{Admissible Kernels and Operators}

Unlike the Riemann-Liouville case, where integration starts at a finite point $a$, the Weyl operator integrates the entire history. This requires the memory kernel to decay sufficiently fast as time delay increases.

\begin{definition}[Generalized Tempered Sonine-Weyl Pair of Order $n$]
\label{def:tempered_pair}
Let $n \in \mathbb{N}$. Let $k, \kappa \in \mathcal{S}'(\mathbb{R})$ be two tempered distributions. The pair $(k, \kappa)$ is called an \textit{admissible tempered pair of order $n$} if the following conditions are satisfied:

\begin{enumerate}
    \item \textbf{Support Condition:} Both distributions are supported on the closed semi-axis $[0, \infty)$. That is, $k, \kappa \in \mathcal{S}'_+(\mathbb{R})$.
    
    \item \textbf{Distributional Generalized Condition:} The convolution of the kernels yields the $n$-th primitive of the Dirac delta, which corresponds to the truncated power function distribution:
    \begin{equation} \label{eq:dist_convolution}
        k * \kappa = \frac{t_+^{n-1}}{(n-1)!} \quad \text{in } \mathcal{S}'(\mathbb{R}),
    \end{equation}
    where $t_+^{\lambda}$ is the standard truncated power distribution defined by $\langle t_+^{\lambda}, \phi \rangle = \int_0^\infty t^\lambda \phi(t) dt$.
    
    For the classical case $n=1$, this recovers the Heaviside step function: $k * \kappa = \theta(t)$.
    
    \item \textbf{Spectral Condition:} To ensure the operator corresponds to a fractional process, the Laplace symbol $\hat{k}(s)$ (defined for $\text{Re}(s)>0$) satisfies algebraic growth/decay conditions at infinity consistent with a fractional order strictly between $n-1$ and $n$.
\end{enumerate}
\end{definition}

\begin{remark}[Diffusive vs. Oscillatory Regimes]
\label{rem:diffusive_vs_oscillatory}
It is crucial to distinguish between two physical regimes covered by Definition \ref{def:tempered_pair}:
\begin{enumerate}
    \item \textbf{The Diffusive Regime:} Characterized by kernels $k(t)$ whose Laplace transforms are \textit{Complete Bernstein Functions} (CBF). These operators generate semi-groups that preserve positivity, consistent with thermodynamic relaxation.
    \item \textbf{The Oscillatory Regime:} Characterized by kernels (e.g., Bessel functions $J_0(t)$ associated with wave equations) that satisfy the general Sonine condition but \textbf{do not} belong to the CBF class. In this regime, the sign-preservation property is naturally lost, reflecting the inertial nature of the solution.
\end{enumerate}
The Spectral Mapping Theorems presented in this work rely on the algebraic convolution structure and hold for both regimes, whereas the maximum principles are restricted to the diffusive case.
\end{remark}

\begin{remark}[From Local Functions to Global Distributions]
\label{rem:local_to_global}
Note that any classical Sonine kernel $k_{loc}(t) \in C_{-1}(0,\infty)$ identified in Section 2.1 generates a regular tempered distribution $k \in \mathcal{S}'(\mathbb{R})$ by extending it as zero for $t<0$. Condition \eqref{eq:dist_convolution} generalizes the classical relation $\int_0^t k(t-\tau)\kappa(\tau)d\tau = 1$ (case $n=1$), allowing us to include kernels that involve singular distributions (like $\delta(t)$) which are essential for modeling elastic solids or purely viscous fluids, but are excluded in the classical function-space theory.
\end{remark}

\begin{remark}[Consistency with Classical and Generalized Kernels] 
\label{rem:consistency}
This definition unifies the classical and generalized frameworks within distribution theory:
\begin{itemize}
    \item \textbf{Case $n=1$ (Classical):} We recover the standard Sonine condition $k * \kappa = 1$ (for $t>0$). This covers standard models like subdiffusion, where kernels are singular at the origin.
    \item \textbf{Case $n>1$ (High-Order):} This allows for kernels that are continuous at the origin (regular kernels). In this setting, the operator $\mathfrak{D}_{\psi,\omega}^{(\kappa)}$ acts as a weighted derivative of order $n$, involving the $n$-th derivative of the convolution.
\end{itemize}
\end{remark}
\begin{figure}[htbp]
    \centering
    \includegraphics[width=0.75\textwidth]{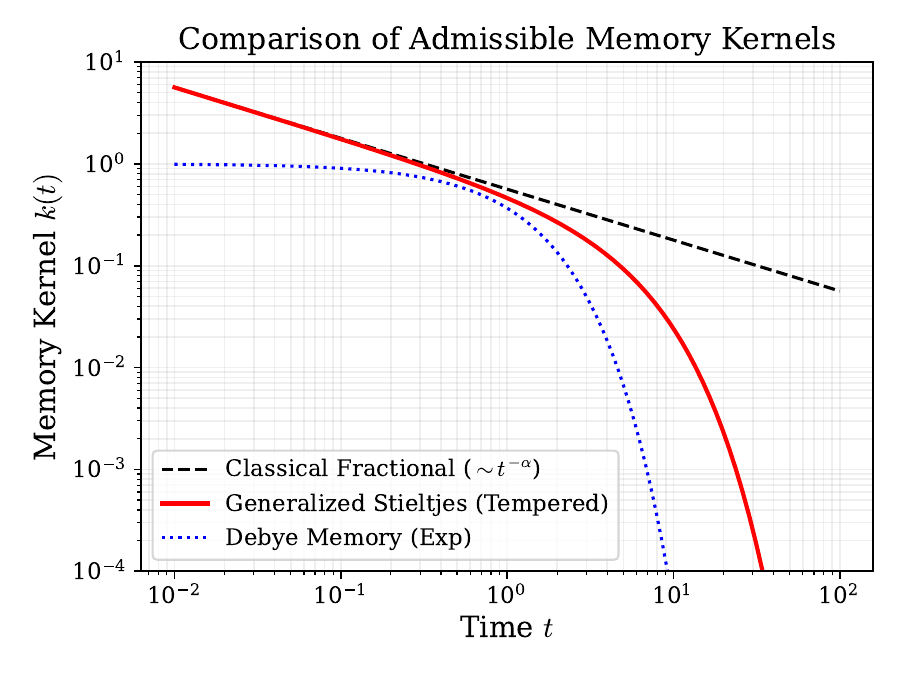} 
    \caption{\textbf{Admissible Memory Kernels.} Comparison of decay rates for different Sonine kernels. While classical fractional calculus relies on pure power-laws (blue), our framework incorporates tempered kernels (orange) and exponential decays (green), allowing for finite propagation speeds and thermodynamic consistency.}
    \label{fig:kernels}
\end{figure}
\subsection{Weighted Weyl-Sonine Operators.}
We now introduce the main operator of this work, constructed via the conjugation of the classical Sonine integral with the weight and scale operators defined in \cite{Fernandez2022}.

\begin{definition}[Auxiliary Structure Operators]
\label{def:structure_operators}
Let $(\psi, \omega)$ be an admissible structure pair. We define the following operators acting on measurable functions $f: \mathbb{R} \to \mathbb{C}$:

\begin{enumerate}
    \item \textbf{The Multiplication Operator} $M_{\omega}$, which weights the function:
    \begin{equation}
        (M_{\omega} f)(t) := \omega(t) f(t).
    \end{equation}
    
    \item \textbf{The Substitution (Time-Warping) Operator} $Q_{\psi}$, which deforms the domain:
    \begin{equation}
        (Q_{\psi} f)(t) := f(\psi(t)).
    \end{equation}
\end{enumerate}

Since $\psi$ is a diffeomorphism and $\omega$ is non-vanishing, both operators are invertible on suitable spaces. Specifically, the inverse substitution is given by $(Q_{\psi}^{-1} g)(t) = g(\psi^{-1}(t))$.
\end{definition}

\begin{remark}
These operators allow us to factorize the Weighted Fourier Transform as a composition: $\mathcal{F}_{\psi,\omega} = \mathcal{F} \circ Q_{\psi}^{-1} \circ M_{\omega}$. %
This factorization is the key to the spectral analysis.
\end{remark}

\begin{definition}[Left-Sided Weighted Weyl-Sonine Integral]
\label{def:weighted_integral}
Let $(k, \kappa)$ be an admissible Sonine pair and let $u$ be a function in the weighted space $\mathcal{S}_{\psi,\omega}$. We define the Left-Sided Weighted Weyl-Sonine Integral, denoted by $\mathfrak{I}_{\psi,\omega}^{(k)}$, as follows:

\textbf{1. Explicit Form:}
The operator is defined pointwise by the weighted integral:
\begin{equation}
    \mathfrak{I}_{\psi,\omega}^{(k)} u(t) := \frac{1}{\omega(t)} \int_{-\infty}^t k\Big(\psi(t) - \psi(\tau)\Big) \omega(\tau) u(\tau) \psi'(\tau) \, d\tau.
\end{equation}

\textbf{2. Structural Form:}
In terms of the auxiliary operators from Definition \ref{def:structure_operators}, the integral acts as a conjugated convolution:
\begin{equation}
    \mathfrak{I}_{\psi,\omega}^{(k)} = M_{\omega}^{-1} Q_{\psi} (k *) Q_{\psi}^{-1} M_{\omega},
\end{equation}
where $(k *)$ denotes the standard convolution operator on $\mathbb{R}$.
\end{definition}

\noindent \textbf{Physical Interpretation:}
This operator models a process with memory in a heterogeneous medium:
\begin{itemize}
    \item The term $\Delta_\psi(t,\tau) = \psi(t) - \psi(\tau)$ represents the \textit{subjective time elapsed} between the past event $\tau$ and the present $t$.
    \item The kernel $k(\Delta_\psi)$ assigns a weight to this memory (typically singular at the origin).
    \item The ratio $\omega(\tau)/\omega(t)$ accounts for the \textit{aging} or changing density of the medium during the process.
\end{itemize}

\begin{definition}[Weighted Weyl-Sonine Derivative of Order $n$]
\label{def:derivada}
Let $n \in \mathbb{N}$ and let $(k, \kappa)$ be an admissible tempered Sonine-Weyl pair of order $n$. We define the Weighted Weyl-Sonine derivative, denoted by $\mathfrak{D}_{\psi,\omega}^{(\kappa)}$, as the regularized inverse of the integral operator.

\textbf{1. Explicit Form:} 
For a suitable function $u$, the operator is defined pointwise by applying the weighted differential operator $n$ times:
\begin{equation}\label{eq:sonine_def}
    \mathfrak{D}_{\psi,\omega}^{(\kappa)}u(t) = \frac{1}{\omega(t)} \left( \frac{1}{\psi^{\prime}(t)} \frac{d}{dt} \right)^n \int_{-\infty}^{t} \kappa(\psi(t)-\psi(\tau))\omega(\tau)u(\tau)\psi^{\prime}(\tau)d\tau.
\end{equation}

\textbf{2. Structural Form:} 
By identifying the differential operator $\frac{1}{\psi'(t)}\frac{d}{dt}$ with the derivative $\frac{d}{dy}$ in the warped time $y=\psi(t)$, the operator admits the following conjugation structure:
\begin{equation}\label{eq:sonine_abstract}
    \mathfrak{D}_{\psi,\omega}^{(\kappa)} = M_{\omega}^{-1} Q_{\psi} \left( \frac{d^n}{dy^n} \circ (\kappa *) \right) Q_{\psi}^{-1} M_{\omega}.
\end{equation}
This abstract form emphasizes that our operator is structurally equivalent to a classical convolution derivative of order $n$, conjugated by the geometry $\psi$ and weight $\omega$.
\end{definition}

Note that for $n=1$, the operator simplifies to the weighted first derivative $\frac{1}{\psi^{\prime}(t)} \frac{d}{dt}$, recovering the classical case. For $n \ge 2$, this formulation captures the higher-order dynamics required by the generalized Sonine pairs.

\begin{remark}[Algebraic Structure and the Global Equivalence of Forms]
\label{rem:structure_and_equivalence}
The construction in Definitions \ref{def:weighted_integral}-\ref{def:derivada} hereditarily preserves the convolution algebra of the standard Sonine calculus, albeit non-trivially deformed by the diffeomorphism $\psi$ and the weight $\omega$.

This global structure also resolves the distinction between Riemann-Liouville and Caputo definitions commonly found in General Fractional Derivatives on finite intervals $[0,T]$ (e.g., Luchko \cite{Luchko2021+}). In those local settings, the non-commutativity of differentiation and integration necessitates explicit regularization terms (initial values). In contrast, the rapid decay of test functions in our weighted Schwartz space $\mathcal{S}_{\psi,\omega}(\mathbb{R})$ ensures that all boundary terms vanish naturally:
$$
\lim_{t \to -\infty} \left( \frac{1}{\psi'(t)} \frac{d}{dt} \right)^k u(t) = 0, \quad \forall k < n.
$$
Consequently, the differential and integral operators commute strictly over the domain, making the Riemann-Liouville and Caputo forms mathematically equivalent. While physically interpreted as a "hypothesis of quiescent past," analytically this is a direct consequence of the topological structure of the functional space.
\end{remark}

\begin{remark}[Unification of Fractional Calculi] \label{rem:unification}
A major advantage of the proposed framework is its ability to encompass a vast array of existing fractional operators on the real line as particular cases. By carefully selecting the memory kernel $k(z)$, the scale function $\psi(t)$, and the weight $\omega(t)$, the operator $\mathfrak{D}_{\psi, \omega}^{(k)}$ reduces to the operators listed in Table \ref{tab:special_cases}.
\end{remark}

\begin{table}[H]
\centering
\renewcommand{\arraystretch}{1.5}
\setlength{\tabcolsep}{8pt}
\caption{Classification of special cases of the Weighted Weyl-Sonine Operator. The first block lists kernels from the \textbf{Diffusive Regime} (Complete Bernstein Functions, monotone dynamics), while the last entry illustrates the \textbf{Oscillatory/Inertial Regime} (non-Bernstein, wave-like dynamics).}
\label{tab:special_cases}
\begin{tabular}{@{}llll@{}}
\toprule
\textbf{Operator Name} & \textbf{Kernel} $k(z)$ & \textbf{Scale} $\psi(t)$ & \textbf{Weight} $\omega(t)$ \\ \midrule
\multicolumn{4}{l}{\textit{\textbf{I. Diffusive Regime (Monotone / Bernstein)}}} \\
Standard Weyl & $\frac{z^{-\alpha}}{\Gamma(1-\alpha)}$ & $t$ & $1$ \\
Tempered Weyl & $\frac{z^{-\alpha}}{\Gamma(1-\alpha)}$ & $t$ & $e^{\lambda t}$ \\
Hadamard-type & $\frac{z^{-\alpha}}{\Gamma(1-\alpha)}$ & $\ln t$ & $1$ \\
Caputo-Fabrizio & $\exp\left(-\frac{\alpha}{1-\alpha}z\right)$ & $t$ & $1$ \\
Atangana-Baleanu & $E_\alpha\left(-\frac{\alpha}{1-\alpha}z^\alpha\right)$ & $t$ & $1$ \\ 
Distributed Order & $\int_0^1 b(\alpha) \frac{z^{-\alpha}}{\Gamma(1-\alpha)} d\alpha$ & $t$ & $1$ \\
\midrule
\multicolumn{4}{l}{\textit{\textbf{II. Oscillatory Regime (Inertial / Non-Bernstein)}}} \\
\textbf{Bessel-Klein-Gordon} & $J_0(2\sqrt{\lambda z})$ & $t$ & $1$ \\
\bottomrule
\end{tabular}
\end{table}

Table \ref{tab:special_cases} illustrates the unifying capability of the proposed framework.
While the first columns encompass geometric variations (time-scales and kernel shapes), the inclusion of the weight $\omega(t)$ reveals a deeper algebraic connection between weighted spaces and tempered operators.
In particular, the case designated as `Tempered Weyl' ($\omega(t) = e^{\lambda t}$) is often conceptually distinguished from `Tempered Memory' models (where the kernel itself decays as $e^{-\lambda t}k(t)$).
However, our formalism reveals that these two approaches are \textbf{algebraically equivalent}.

\begin{remark}[Equivalence of Space Tempering and Memory Tempering]
\label{rem:tempering_equivalence}
It is a non-trivial fact that weighting the function space is isomorphic to tempering the memory kernel. Consider the weighted integral $\mathfrak{I}_{\psi, \omega}^{(k)}$ with standard scale $\psi(t)=t$ and exponential weight $\omega(t) = e^{\lambda t}$. A direct computation shows:
\begin{equation}
    \mathfrak{I}_{t, e^{\lambda t}}^{(k)} u(t) 
    = e^{-\lambda t} \int_{-\infty}^t k(t-\tau) e^{\lambda \tau} u(\tau) \, d\tau 
    = \int_{-\infty}^t \left[ e^{-\lambda(t-\tau)} k(t-\tau) \right] u(\tau) \, d\tau.
\end{equation}
The term in brackets identifies a new effective kernel $k_{\lambda}(z) = e^{-\lambda z}k(z)$. Thus, the weighted operator on standard $L^2$ functions is identical to the unweighted convolution operator with a tempered kernel.
This confirms that our framework naturally generates the entire class of tempered fractional operators (such as the Tempered RL or Caputo derivatives) simply by selecting the appropriate weight $\omega$, without needing to redefine the kernel class ad-hoc.
\end{remark}

\subsection{Spectral Analysis}

To establish a direct connection with the classical theory of pseudo-differential operators, we characterize the Generalized Sonine pair through its spectral signature.

\begin{remark}[\textbf{The Spectral Advantage}]
    Luchko notes in \cite{Luchko2021} that an analytical description of the entire set of Sonine kernels remains an open problem. In the time domain, explicit closed-form formulas for the pairs $(k, \kappa)$ are often intractable or involve special functions that are computationally expensive to evaluate (e.g., H-functions).
    
    Our spectral framework bypasses this hurdle. By characterizing the operator via its symbol $\Phi(s)$ rather than its temporal kernel, we can solve fractional differential equations analytically even when the explicit form of the memory kernel $\kappa(t)$ is unknown, provided its Laplace symbol is well-defined. This is particularly useful in data-driven modeling, where $\Phi(s)$ is often fitted directly from rheological spectra.
\end{remark}

\begin{definition}\label{def:spectral_symbol}[The Spectral Symbol of Order $n$]
Let $n \in \mathbb{N}$ and let $(k, \kappa)$ be an admissible Sonine-Weyl pair of order $n$. We define the \textit{Spectral Symbol} $\Phi(s)$ (for $\text{Re}(s)>0$) as the Laplace transform of the associated derivative kernel $\kappa$:
\begin{equation}
\Phi(s) := \mathcal{L}\{\kappa\}(s).
\end{equation}
Recalling from Definition \ref{def:tempered_pair} that $(k*\kappa)(t) = t_+^{n-1}/(n-1)!$, the Laplace transform of the convolution implies the fundamental algebraic constraint:
\begin{equation}
    \mathcal{L}\{k\}(s) \cdot \Phi(s) = \frac{1}{s^n}.
\end{equation}
Consequently, the integral kernel $k$ is fully determined in the spectral domain by the inverse symbol:
\begin{equation}
\mathcal{L}\{k\}(s) = \frac{1}{s^n \Phi(s)}.
\label{eq:kernel_symbol_relation}
\end{equation}
\end{definition}

\begin{proposition}[Existence and Analytic Structure of Admissible Distributional Pairs]
\label{prop:sonine_existence}
Let $\Phi(s)$ be a function analytic in the right half-plane. A unique pair of tempered distributions $(k, \kappa) \in \mathcal{S}'_+(\mathbb{R}) \times \mathcal{S}'_+(\mathbb{R})$\footnote{The notation $\mathcal{S}'_+(\mathbb{R})$ denotes the subspace of tempered distributions supported on $[0, \infty)$. This condition imposes physical causality (the memory kernel vanishes for $t<0$) and ensures the existence of the Laplace transform $\Phi(s)$.} satisfying the generalized Sonine relation exists if and only if they are determined by the inverse Laplace transforms of \eqref{eq:k_construction}:
\begin{equation} \label{eq:k_construction}
    \mathcal{L}\{k\}(s) = \frac{1}{s^n \Phi(s)}, \quad \mathcal{L}\{\kappa\}(s) = \Phi(s).
\end{equation}
Moreover, regarding the physical admissibility within the distributional framework, we distinguish two levels of regularity based on the criteria established by Kochubei \cite{Kochubei2011} and Hanyga \cite{Hanyga2020}:

\begin{enumerate}
    \item \textbf{Singularity Condition (General Non-Locality):} To generate a valid non-local derivative, the distribution $\kappa$ must not be regular at the origin. This is satisfied if $\Phi(s)$ behaves asymptotically as a \textbf{Stieltjes function} (e.g., $\Phi(s) \sim s^{\alpha-n}$), implying that $\kappa$ is a singular distribution.
    
    \item \textbf{Thermodynamic Consistency (Diffusive Subclass):} Specifically for \textbf{dissipative/diffusive systems}, we require that the effective operator symbol $\Psi(s) = s^n \Phi(s)$ belongs to the class of \textbf{Complete Bernstein Functions} (CBF). This guarantees that the corresponding operator generates a positivity-preserving semi-group (see Remark \ref{rem:diffusive_vs_oscillatory}).
    
    \item \textbf{Conjugate Regularity:} If condition (2) holds (diffusive case with $n=1$), the conjugate kernel $k$ is also a Stieltjes distribution (typically a Locally Integrable Completely Monotone function), satisfying the distributional Sonine condition $k * \kappa = \delta$ within the class of singular measures.
\end{enumerate}
\end{proposition}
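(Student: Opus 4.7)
The plan is to split the argument into two logically independent parts: first, establishing the bijective correspondence between admissible symbols $\Phi(s)$ and causal tempered pairs $(k,\kappa)$ via the Laplace isomorphism on $\mathcal{S}'_+(\mathbb{R})$; and second, verifying the three admissibility criteria by invoking the representation theorems of Bernstein, Stieltjes, and Complete Bernstein function theory.

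For the existence and uniqueness statement, I would rely on the classical characterization of the Laplace transform on $\mathcal{S}'_+(\mathbb{R})$: every distribution supported on $[0,\infty)$ with suitable growth admits a Laplace image that is analytic in the open right half-plane and of polynomial growth, and this correspondence is injective (by Fourier-Laplace analytic continuation). Given $\Phi(s)$ satisfying the hypothesis, I set $\kappa := \mathcal{L}^{-1}\{\Phi\}$ and $k := \mathcal{L}^{-1}\{1/(s^n \Phi)\}$, both landing in $\mathcal{S}'_+(\mathbb{R})$. Since both distributions have support in $[0,\infty)$, the convolution $k * \kappa$ is well-defined in $\mathcal{S}'_+(\mathbb{R})$ and the convolution theorem yields $\mathcal{L}\{k * \kappa\}(s) = 1/s^n = \mathcal{L}\{t_+^{n-1}/(n-1)!\}(s)$; injectivity then recovers the generalized Sonine relation \eqref{eq:dist_convolution}. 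The converse direction is immediate: any pair satisfying \eqref{eq:dist_convolution} must, after Laplace transform, obey $\mathcal{L}\{k\}\cdot \mathcal{L}\{\kappa\} = 1/s^n$, which forces the representation \eqref{eq:k_construction} once $\Phi = \mathcal{L}\{\kappa\}$ is prescribed.

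For the three admissibility classifications, I would proceed by invoking the appropriate structural theorems. For condition (1), the asymptotic behavior of $\Phi(s)$ as a Stieltjes-type function with growth $s^{\alpha-n}$, $n-1<\alpha<n$, at infinity implies via the Karamata-Abelian theorem that $\kappa$ possesses a non-integrable singularity at the origin, ruling out regularity and guaranteeing genuine non-locality. For condition (2), I would cite the integral representation of Complete Bernstein Functions (Schilling-Song-Vondraček): $\Psi(s) = s^n\Phi(s)$ is CBF iff it is the Laplace exponent of a subordinator, and the associated operator is then the generator of a positivity-preserving $C_0$-semigroup, which is the thermodynamic admissibility requirement of Kochubei and Hanyga. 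For condition (3), I would use the fundamental CBF duality: if $\Psi$ is a non-zero CBF, then $1/\Psi$ is a Stieltjes function, hence the Laplace transform of a locally integrable completely monotone density; applied to $\mathcal{L}\{k\} = 1/(s^n\Phi) = 1/\Psi$ in the case $n=1$, this identifies $k$ as a LICM function and confirms the distributional Sonine relation $k * \kappa$ reduces to $\delta$ after differentiation.

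The main technical obstacle I anticipate is the delicate interplay between the distributional setting and the classical function-theoretic machinery of Bernstein/Stieltjes classes, which are traditionally formulated for Laplace transforms of measures or $L^1_{loc}$ densities. Our framework allows $\kappa$ to contain a Dirac mass at the origin (corresponding to the elastic or purely viscous limit of the memory model, cf. Remark \ref{rem:local_to_global}), so one must carefully decompose $\Phi(s) = c_0 + \Phi_{\mathrm{reg}}(s) + \Phi_{\mathrm{sing}}(s)$ along a Lévy-Khintchine-type splitting and track each component through the inversion step to ensure no admissible symbol is lost. Once this decomposition is verified, the growth estimates needed to guarantee that $\mathcal{L}^{-1}\{1/(s^n\Phi)\}$ actually lies in $\mathcal{S}'_+(\mathbb{R})$ (rather than in a strictly larger space of distributions of exponential type) follow from the hypothesis $\Phi(s)\sim s^{\alpha-n}$, closing the argument.
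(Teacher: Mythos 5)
Your proposal is correct and follows essentially the same route as the paper: existence and uniqueness via the Laplace-transform isomorphism on causal tempered distributions, reducing the generalized Sonine relation to the algebraic identity $\hat{k}(s)\hat{\kappa}(s)=s^{-n}$, and the admissibility criteria via the Stieltjes/Complete Bernstein duality ($\Psi$ CBF $\Rightarrow 1/\Psi$ Stieltjes $\Rightarrow k$ has a locally integrable completely monotone density). If anything, you are more careful than the paper on the one genuinely delicate point — verifying that $1/(s^n\Phi(s))$ actually lies in the range of the Laplace transform on $\mathcal{S}'_+(\mathbb{R})$, which the paper asserts without the Paley--Wiener-type growth estimate you correctly flag as necessary.
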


\begin{proof}
    \textit{Algebraic Existence in $\mathcal{S}'$.}
    The generalized condition $(k * \kappa)(t) = t_+^{n-1}/(n-1)!$ corresponds in the Laplace domain to the algebraic equation $\hat{k}(s)\hat{\kappa}(s) = s^{-n}$. Since the Laplace transform is an isomorphism on the space of supported distributions, identifying $\hat{\kappa}(s) = \Phi(s)$ uniquely determines $\hat{k}(s) = (s^n \Phi(s))^{-1}$ as a tempered distribution.

    \textit{Analytic Properties (Diffusive Case).}
    Assume strictly that the operator governs a diffusive process, so $\Psi(s) = s^n \Phi(s)$ is a Complete Bernstein Function (CBF). By the properties of CBFs, its reciprocal $1/\Psi(s)$ is a Stieltjes function. 
    The inverse Laplace transform of a Stieltjes function is known to be a Completely Monotone (CM) function (or measure). Thus, $k(t) = \mathcal{L}^{-1}\{1/\Psi(s)\}$ is a non-negative distribution with CM density.
    By duality, if $\Phi(s)$ is Stieltjes, then $\kappa$ is also a distribution with CM density (singular at the origin).
    This structural duality ensures that the resulting Weyl-Sonine operators preserve the cone of non-negative functions in $\mathcal{S}_{\psi,\omega}$, a necessary condition for the well-posedness of diffusion models. For oscillatory cases (where CBF conditions are relaxed), existence is guaranteed by the algebraic part, but positivity is not preserved.
\end{proof}

\begin{remark}[Comparison with Classical Frameworks: Riemann-Liouville vs. Weyl]
    It is important to contrast our construction with the classical theory developed by Luchko \cite{Luchko2021}. The classical approach primarily addresses \textbf{Riemann-Liouville type} operators defined on finite intervals $[0, T]$ acting on spaces of continuous functions (e.g., $C_{-1}$). In that setting, kernel pairs are strictly categorized by their integrability at the origin (classes $\mathcal{K}$ vs. $\mathcal{S}_{-1}$).
    
    In contrast, our work generalizes these concepts to \textbf{Weyl-type} operators defined on the entire real line $\mathbb{R}$. This extension is non-trivial as it requires controlling the asymptotic behavior of functions at infinity. By constructing the Weighted Weyl-Sonine operators over the distribution space $\mathcal{S}'_{\psi,\omega}$, we achieve two advancements:
    \begin{enumerate}
        \item We bypass the ad-hoc integrability conditions at the origin, as $\mathcal{S}'$ naturally accommodates hypersingular functionals.
        \item We extend the algebraic structure of the Sonine pairs to the full timeline, allowing for the spectral analysis of "aging" systems with infinite memory history, a feature not captured by the finite-start Riemann-Liouville definitions.
    \end{enumerate}
    Consequently, our Proposition \ref{prop:sonine_existence} adapts the physical admissibility criteria of Kochubei (Bernstein/Stieltjes) to the broader context of Weyl operators on tempered distributions.
\end{remark}


We now proceed to the main spectral mapping theorem. It is worth noting that the classical fractional calculus algebra—characterized by the power law $s^{\alpha}$—is recovered here as a particular instance, now generalized by the spectral symbol $\Phi(s)$.

\begin{theorem}[Generalized Spectral Mapping Theorem]
\label{thm:spectral_mapping}
Let $n \in \mathbb{N}$ and let $u \in \mathcal{S}_{\psi,\omega}(\mathbb{R})$. Let $(k, \kappa)$ be an admissible distributional Sonine pair with associated Laplace symbol $\Phi(s) := \hat{\kappa}(s)$. The Weighted Weyl-Sonine operators satisfy the following spectral relations:

\begin{enumerate}
    \item \textbf{For the Integral:}
    \begin{equation}
        \mathcal{F}_{\psi,\omega}\{\mathfrak{I}_{\psi,\omega}^{(k)}u\}(\xi) = \frac{1}{(i\xi)^n \Phi(i\xi)} \cdot \mathcal{F}_{\psi,\omega}\{u\}(\xi).
    \end{equation}

    \item \textbf{For the Derivative:} Considering the operator $\mathfrak{D}_{\psi,\omega}^{(\kappa)}$ from Definition \ref{def:derivada}, the spectral mapping holds:
    \begin{equation}
        \mathcal{F}_{\psi,\omega}[\mathfrak{D}_{\psi,\omega}^{(\kappa)}u](\xi) = (i\xi)^n \Phi(i\xi) \cdot \mathcal{F}_{\psi,\omega}[u](\xi).
    \end{equation}
\end{enumerate}
\end{theorem}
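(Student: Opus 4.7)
The plan is to exploit the conjugation structure in both the operators (Definitions \ref{def:weighted_integral} and \ref{def:derivada}) and in the Weighted Fourier Transform itself, so that the entire problem collapses onto the classical Fourier-convolution calculus on $\mathbb{R}$. The starting observation is the algebraic identity
\begin{equation*}
    \mathcal{F}_{\psi,\omega} \circ M_{\omega}^{-1} \circ Q_{\psi} \;=\; \bigl(\mathcal{F} \circ Q_{\psi}^{-1} \circ M_{\omega}\bigr) \circ M_{\omega}^{-1} \circ Q_{\psi} \;=\; \mathcal{F},
\end{equation*}
which is immediate from the factorization $\mathcal{F}_{\psi,\omega} = \mathcal{F} \circ Q_{\psi}^{-1} \circ M_{\omega}$. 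Dually, $Q_{\psi}^{-1} M_{\omega} = \mathcal{F}^{-1} \mathcal{F}_{\psi,\omega}$. Thus, any operator that factors as $M_{\omega}^{-1} Q_{\psi} A Q_{\psi}^{-1} M_{\omega}$ for some operator $A$ acting in the warped variable satisfies $\mathcal{F}_{\psi,\omega} \circ M_{\omega}^{-1} Q_{\psi} A Q_{\psi}^{-1} M_{\omega} = \mathcal{F} \circ A \circ \mathcal{F}^{-1} \circ \mathcal{F}_{\psi,\omega}$. This reduces the spectral question for $\mathfrak{I}_{\psi,\omega}^{(k)}$ and $\mathfrak{D}_{\psi,\omega}^{(\kappa)}$ to identifying the classical Fourier symbol of the inner operator $A$.

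For part (1), I substitute $A = (k\,*)$ (convolution with the tempered kernel $k$ on $\mathbb{R}$), yielding
\begin{equation*}
    \mathcal{F}_{\psi,\omega}\bigl[\mathfrak{I}_{\psi,\omega}^{(k)} u\bigr](\xi) \;=\; \hat{k}(\xi)\cdot \mathcal{F}_{\psi,\omega}[u](\xi),
\end{equation*}
where $\hat{k}$ is the classical Fourier transform (in $\mathcal{S}'$) of $k$. Here I must verify that $Q_{\psi}^{-1} M_{\omega} u \in \mathcal{S}(\mathbb{R})$ whenever $u \in \mathcal{S}_{\psi,\omega}(\mathbb{R})$, which is precisely the defining isomorphism of Definition \ref{def:weighted_schwartz}; and that the distributional convolution $k * (Q_{\psi}^{-1} M_{\omega} u)$ is well-defined, which follows from the fact that $k \in \mathcal{S}'_+(\mathbb{R})$ and Schwartz functions are $\mathcal{O}_M$-multipliers of tempered distributions. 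The kernel identification then proceeds by passing from Laplace to Fourier symbol: since $k$ is causally supported on $[0,\infty)$ and tempered, its Fourier transform equals the Laplace transform evaluated at $s = i\xi$ in the sense of distributional boundary values, so Definition \ref{def:spectral_symbol} gives $\hat{k}(\xi) = \mathcal{L}\{k\}(i\xi) = 1/[(i\xi)^n \Phi(i\xi)]$, as required.

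For part (2), I apply the same reduction with $A = (d^n/dy^n) \circ (\kappa\,*)$. Since differentiation on $\mathcal{S}(\mathbb{R})$ has Fourier symbol $(i\xi)^n$ and convolution with $\kappa$ has symbol $\hat{\kappa}(\xi) = \Phi(i\xi)$, multiplicativity of Fourier on composition of translation-invariant operators gives the product symbol $(i\xi)^n \Phi(i\xi)$. A subtle point here is that the expression \eqref{eq:sonine_def} puts the $n$ warped derivatives outside the convolution, so I must justify that this corresponds to $d^n/dy^n \circ (\kappa *)$ in the straightened variable $y = \psi(t)$: this is where the identity $\psi'(t)^{-1}(d/dt) = d/dy$, used already in Proposition \ref{prop:density_boundary}, is essential. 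No boundary terms arise because $\kappa * (Q_\psi^{-1} M_\omega u)$ inherits sufficient decay at $-\infty$ from Remark \ref{rem:structure_and_equivalence}, so iterated differentiation commutes with Fourier without regularization.

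The main obstacle I anticipate is the rigorous passage from the Laplace symbol $\Phi(s)$, defined on the half-plane $\operatorname{Re}(s) > 0$, to the Fourier symbol $\Phi(i\xi)$ on the imaginary axis. In the diffusive regime $\Phi$ is a Stieltjes-type function whose non-tangential boundary values on $i\mathbb{R}$ exist as locally integrable functions, so the identification is clean. In the oscillatory regime (e.g.\ the Bessel kernel of Remark \ref{rem:diffusive_vs_oscillatory}), $\Phi(i\xi)$ may have real singularities, and one must interpret $(i\xi)^n \Phi(i\xi)$ as a tempered symbol and the multiplication as that of a tempered distribution by a polynomially bounded smooth symbol away from singularities. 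Handling this uniformly — rather than case-by-case — requires invoking Condition (3) of Definition \ref{def:tempered_pair} (algebraic growth of $\hat{k}(s)$) to place both $\hat{k}$ and $\hat{\kappa}$ inside the multiplier algebra $\mathcal{O}_M(\mathbb{R})$ of $\mathcal{S}'(\mathbb{R})$, after which all the convolution-multiplication transitions used above are legitimate.
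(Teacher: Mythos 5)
Your proposal is correct and follows essentially the same route as the paper: both exploit the factorization $\mathcal{F}_{\psi,\omega} = \mathcal{F} \circ Q_{\psi}^{-1} \circ M_{\omega}$ together with the conjugated structural forms of $\mathfrak{I}_{\psi,\omega}^{(k)}$ and $\mathfrak{D}_{\psi,\omega}^{(\kappa)}$ to cancel the outer geometric factors, reduce to the classical convolution and derivative theorems for $v = Q_{\psi}^{-1}M_{\omega}u \in \mathcal{S}(\mathbb{R})$, and identify the symbol via the frequency-domain Sonine relation $\hat{k}(i\xi)\hat{\kappa}(i\xi) = (i\xi)^{-n}$. Your additional attention to the Laplace-to-Fourier boundary-value passage and to placing $\hat{k},\hat{\kappa}$ in the multiplier algebra $\mathcal{O}_M$ actually supplies justification that the paper's own proof leaves implicit.
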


\begin{proof}
We rely on the structural factorization of the Weighted Fourier Transform established in Section 3.2: $\mathcal{F}_{\psi,\omega} = \mathcal{F} \circ Q_{\psi}^{-1} \circ M_{\omega}$, which acts on the auxiliary function $v = Q_{\psi}^{-1} (M_{\omega} u) \in \mathcal{S}(\mathbb{R})$.

\textbf{Case 1 (Integral):} 
Recall from Definition \ref{def:weighted_integral} that the integral operator admits the conjugation structure:
$$
\mathfrak{I}_{\psi,\omega}^{(k)} = M_{\omega}^{-1} Q_{\psi} (k *) Q_{\psi}^{-1} M_{\omega}.
$$
Applying the Weighted Fourier Transform to the operator $\mathfrak{I}_{\psi,\omega}^{(k)} u$ leads to a direct cancellation of the structural maps:
\begin{align*}
\mathcal{F}_{\psi,\omega} \left( \mathfrak{I}_{\psi,\omega}^{(k)} u \right) &= (\mathcal{F} \circ Q_{\psi}^{-1} \circ M_{\omega}) \circ (M_{\omega}^{-1} Q_{\psi} (k *) Q_{\psi}^{-1} M_{\omega} u) \\
&= \mathcal{F} \left( k * (Q_{\psi}^{-1} M_{\omega} u) \right) = \mathcal{F} \{ k * v \}.
\end{align*}
Since $v \in \mathcal{S}(\mathbb{R})$ and $k \in \mathcal{S}'_+$, we apply the Convolution Theorem for tempered distributions. Using the Sonine condition in the frequency domain, $\hat{k}(i\xi)\hat{\kappa}(i\xi) = (i\xi)^{-n}$, we relate the symbol of $k$ to $\Phi$:
$$
\mathcal{F}\{k * v\}(\xi) = \hat{k}(i\xi) \cdot \mathcal{F}\{v\}(\xi) = \frac{1}{(i\xi)^n \hat{\kappa}(i\xi)} \mathcal{F}\{v\}(\xi).
$$
Substituting $\Phi(i\xi) = \hat{\kappa}(i\xi)$ and identifying $\mathcal{F}\{v\} = \mathcal{F}_{\psi,\omega}\{u\}$, the first result follows.

\textbf{Case 2 (Derivative):} 
For the derivative operator of order $n$, we utilize the structural form given in Eq. \eqref{eq:sonine_abstract}:
$$
\mathfrak{D}_{\psi,\omega}^{(\kappa)} = M_{\omega}^{-1} Q_{\psi} \left( \frac{d^n}{dy^n} \circ (\kappa *) \right) Q_{\psi}^{-1} M_{\omega}.
$$
Applying $\mathcal{F}_{\psi,\omega}$ again cancels the outer geometrical factors, yielding the Fourier transform of a convoluted derivative in the Schwartz space:
$$
\mathcal{F}_{\psi,\omega}[\mathfrak{D}_{\psi,\omega}^{(\kappa)}u](\xi) = \mathcal{F} \left[ \frac{d^n}{dy^n} (\kappa * v) \right](\xi).
$$
Using the derivative property of the classical Fourier transform iteratively, $\mathcal{F}\{f^{(n)}\}(\xi) = (i\xi)^n \mathcal{F}\{f\}(\xi)$, we obtain:
$$
\mathcal{F} \left[ \frac{d^n}{dy^n} (\kappa * v) \right](\xi) = (i\xi)^n \cdot \mathcal{F}\{\kappa * v\}(\xi) = (i\xi)^n \Phi(i\xi) \mathcal{F}\{v\}(\xi).
$$
Recovering $\mathcal{F}_{\psi,\omega}[u](\xi)$ from $\mathcal{F}\{v\}(\xi)$ completes the proof.
\end{proof}

\begin{remark}[Natural Domains and Future Outlook]
\label{rem:sobolev_spaces}
The Generalized Spectral Mapping Theorem implies that the natural energy spaces for these operators are the \textit{Weighted Sonine-Sobolev spaces}, defined via the magnitude of the \textbf{full operator symbol} $\Psi(i\xi) = (i\xi)^n \Phi(i\xi)$:
$$
\mathcal{H}_{\psi,\omega}^{\Phi}(\mathbb{R}) := \left\{ u \in L^2_{\psi,\omega}(\mathbb{R}) : \int_{\mathbb{R}} \left(1 + |(i\xi)^n \Phi(i\xi)|^2 \right) |\mathcal{F}_{\psi,\omega}u(\xi)|^2 \, d\xi < \infty \right\}.
$$
While the construction of these spaces is naturally suggested by our spectral framework, a rigorous treatment—including density results, embedding theorems, and trace properties involving the weights $(\psi, \omega)$—lies outside the scope of the present work. These analytic aspects constitute a substantial topic that will be addressed in a forthcoming study dedicated to the regularity of solutions.
\end{remark}


\subsection{The Generalized Marchaud Representation}
\label{sec:marchaud}

To fully align our framework with the analytical philosophy of non-local operators (see Stinga \cite{Stinga} and Samko \cite{Samko1993}), we establish a connection with hypersingular integral representations. While the Sonine form (convolution) is convenient for algebraic manipulation, the Marchaud form (finite differences) highlights the non-local nature of the operator as an accumulation of `jumps' governed by a L\'evy measure.

We show that our Weighted Weyl-Sonine derivative admits such a representation, provided the spectral symbol $\Phi(s)$ corresponds to a Bernstein function associated with differentiation orders strictly between 0 and 1.

\begin{definition}[Associated L\'evy Density]
\label{def:levy}
Let $\Phi(s)$ be the spectral symbol defined in Definition \ref{def:spectral_symbol}. Assume $\Phi$ admits the L\'evy-Khintchine representation for a drift-free subordinator:
\begin{equation} \label{eq:levy_khintchine}
    \Phi(s) = \int_0^\infty (1 - e^{-sz}) \, d\mu(z),
\end{equation}
where $\mu$ is a L\'evy measure on $(0, \infty)$ satisfying $\int_0^\infty \min(1, z) \, d\mu(z) < \infty$. Furthermore, we assume $\mu$ is absolutely continuous with respect to the Lebesgue measure, admitting a L\'evy density $g(z)$ such that $d\mu(z) = g(z)dz$.
\end{definition}

\begin{theorem}[Weighted Marchaud Formula]
\label{thm:marchaud_equivalence}
Under the assumptions of Definition \ref{def:levy}, the Weighted Weyl-Sonine derivative $\mathfrak{D}_{\psi,\omega}^{(\kappa)}$ admits the following generalized Marchaud representation for any $u \in \mathcal{S}_{\psi,\omega}$:
\begin{equation} \label{eq:marchaud_formula}
    \mathfrak{D}_{\psi,\omega}^{(\kappa)} u(t) = \frac{1}{\omega(t)} \int_{-\infty}^t \Big( (\omega u)(t) - (\omega u)(\tau) \Big) g\Big(\psi(t) - \psi(\tau)\Big) \psi'(\tau) \, d\tau.
\end{equation}
\end{theorem}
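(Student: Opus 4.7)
The plan is to exploit the conjugation structure of Definition~\ref{def:derivada} to reduce the identity to its classical (unweighted, flat-time) analogue on $\mathbb{R}$, prove that case via the Lévy–Khintchine representation of the full derivative symbol, and then return to the weighted setting by a change of variables. I take $n=1$ throughout, consistent with the hypothesis that the Bernstein-type orders lie in $(0,1)$.

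\textbf{Step 1 (Reduction to the flat case).} Setting $v := Q_\psi^{-1} M_\omega u \in \mathcal{S}(\mathbb{R})$ (cf.\ Definition~\ref{def:weighted_schwartz}), the structural identity \eqref{eq:sonine_abstract} gives
\[
\mathfrak{D}_{\psi,\omega}^{(\kappa)} u(t) \;=\; \frac{1}{\omega(t)} \left[\frac{d}{dy}(\kappa * v)\right]\bigl(\psi(t)\bigr),
\]
so it suffices to prove the flat Marchaud identity $\frac{d}{dy}(\kappa * v)(y) = \int_0^\infty (v(y)-v(y-z))\, g(z)\, dz$ for $v \in \mathcal{S}(\mathbb{R})$.

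\textbf{Step 2 (Symbolic verification of the flat identity).} By Theorem~\ref{thm:spectral_mapping}, the left-hand side has Fourier symbol $\Psi(i\xi) := (i\xi)\Phi(i\xi)$. Substituting \eqref{eq:levy_khintchine} into the right-hand side and applying Fubini yields
\[
\mathcal{F}\!\left[\int_0^\infty \bigl(v(\cdot)-v(\cdot-z)\bigr)\, g(z)\,dz\right]\!(\xi) \;=\; \left(\int_0^\infty (1-e^{-i\xi z})\,g(z)\,dz\right) \hat v(\xi),
\]
which matches $\Psi(i\xi)\,\hat v(\xi)$ once $g$ is read as the Lévy density associated with the full derivative symbol $\Psi$ (the canonical case $\Psi(s)=s^\alpha$, $g(z) \propto z^{-1-\alpha}$ fixes the convention unambiguously). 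Equality of Fourier symbols, combined with the continuity of both sides for $v\in\mathcal{S}$, upgrades to the desired pointwise flat Marchaud identity.

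\textbf{Step 3 (Change of variables).} In the flat Marchaud integral evaluated at $y=\psi(t)$, substitute $z = \psi(t)-\psi(\tau)$; the global diffeomorphism hypothesis on $\psi$ converts $z \in (0,\infty)$ bijectively into $\tau \in (-\infty, t)$ with $dz = \psi'(\tau)\,d\tau$. Using $v(\psi(\cdot)) = (\omega u)(\cdot)$ and incorporating the outer prefactor $1/\omega(t)$ from Step~1 produces exactly \eqref{eq:marchaud_formula}.

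\textbf{Main obstacle.} The analytic heart of the argument lies in Step~2: one must rigorously justify the absolute convergence of the Marchaud integral using only the Lévy integrability condition $\int_0^\infty \min(1,z)\, g(z)\,dz < \infty$, combining the Taylor estimate $v(y)-v(y-z)=O(z)$ near the origin with the uniform bound $|v(y)-v(y-z)|\le 2\|v\|_\infty$ at infinity. A subtler point is the precise distributional identification linking the Lévy density $g$ to (the derivative of) the Sonine kernel $\kappa$, so that an integration by parts on $(\kappa*v)'$ produces the finite-difference form with the correct coefficient and vanishing boundary terms at $0^+$ and $\infty$; once this identification is cleanly established, Steps~1 and~3 amount to routine bookkeeping.
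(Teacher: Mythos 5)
Your proposal follows essentially the same route as the paper's proof: conjugate by $M_\omega^{-1}Q_\psi$ to reduce the weighted formula to a flat finite-difference integral $\int_0^\infty (v(y)-v(y-z))g(z)\,dz$, identify its Fourier symbol via Fubini and the L\'evy--Khintchine representation, and pass between the two forms with the substitution $z=\psi(t)-\psi(\tau)$. Your Step~2 is in fact slightly more careful than the paper's argument, which matches the resulting symbol to $\Phi(i\xi)=\hat\kappa(i\xi)$ even though the derivative's full symbol from Theorem~\ref{thm:spectral_mapping} is $(i\xi)\Phi(i\xi)$; the convention you adopt --- taking $g$ to be the L\'evy density of $\Psi(s)=s\Phi(s)$, consistent with the classical case $\Psi(s)=s^\alpha$, $g(z)\propto z^{-1-\alpha}$ --- is the one under which the stated identity actually holds.
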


\begin{proof}
We employ the uniqueness of the spectral representation established in Theorem \ref{thm:spectral_mapping}. Our goal is to show that the right-hand side of \eqref{eq:marchaud_formula}, under the conjugation map $\mathcal{F}_{\psi,\omega}$, yields the correct spectral symbol $\Phi$.

Let $u \in \mathcal{S}_{\psi,\omega}$ and define the auxiliary function $v = Q_{\psi}^{-1}M_{\omega}u$ on the rectified timeline $y = \psi(t)$. Recall that $(M_\omega u)(t) = v(\psi(t))$. Substituting this into the integral term of \eqref{eq:marchaud_formula}, the difference term becomes:
$$
(\omega u)(t) - (\omega u)(\tau) = v(\psi(t)) - v(\psi(\tau)).
$$
Let $I(t)$ denote the integral in \eqref{eq:marchaud_formula}. We analyze the operator on the rectified domain by applying the conjugation steps and evaluating at $t = \psi^{-1}(y)$:
$$
(Q_{\psi}^{-1}M_{\omega} I)(y) = \int_{-\infty}^{\psi^{-1}(y)} \Big( v(y) - v(\psi(\tau)) \Big) g\Big(y - \psi(\tau)\Big) \psi'(\tau) \, d\tau.
$$
We perform the change of variables $z = y - \psi(\tau)$. This implies $dz = -\psi'(\tau)d\tau$. The integration limits transform as follows: as $\tau \to -\infty$, $z \to \infty$; and as $\tau \to \psi^{-1}(y)$, $z \to 0$. Absorbing the minus sign into the limits, we obtain:
\begin{equation*}
(Q_{\psi}^{-1}M_{\omega} I)(y) = \int_{0}^{\infty} \Big( v(y) - v(y-z) \Big) g(z) \, dz.
\end{equation*}
This represents a convolution of differences. We take the classical Fourier transform $\mathcal{F}$ with respect to $y$. Using the linearity of the integral and Fubini's theorem (justified by the convergence of the L\'evy measure against bounded differences), we interchange the Fourier integral with the $dz$ integral:
\begin{align*}
\mathcal{F}\left\{ \int_{0}^{\infty} (v(\cdot) - v(\cdot-z)) g(z) \, dz \right\}(\xi) &= \int_{0}^{\infty} \mathcal{F}\Big\{ v(y) - v(y-z) \Big\}(\xi) \, g(z) \, dz \\
&= \int_{0}^{\infty} \Big( \hat{v}(\xi) - e^{-i\xi z}\hat{v}(\xi) \Big) g(z) \, dz \\
&= \hat{v}(\xi) \left( \int_{0}^{\infty} (1 - e^{-i\xi z}) g(z) \, dz \right).
\end{align*}
By the L\'evy-Khintchine formula \eqref{eq:levy_khintchine} (with $s=i\xi$), the integral in parentheses is exactly $\Phi(i\xi)$. Thus, the spectral symbol matches Definition \ref{def:spectral_symbol}, completing the proof.
\end{proof}

\begin{remark}
This result unifies the algebraic approach with the analytical perspective. For the classical fractional case where $\Phi(s) = s^\alpha$ (with $\alpha \in (0,1)$), the associated L\'evy density is $g(z) = \frac{\alpha}{\Gamma(1-\alpha)} z^{-(1+\alpha)}$. Substituting this into \eqref{eq:marchaud_formula} recovers the weighted extension of the classical Marchaud fractional derivative.
\end{remark}

\begin{remark}[Abstract Operator Unification]
Before proceeding to the applications, it is worth noting that the structure presented here is an instance of a broader algebraic construction. If $\mathcal{T}: \mathcal{H} \to L^2(\mathbb{R})$ is any generic invertible linear operator, one can define an abstract Weyl-Sonine operator as $\mathfrak{D}_{\mathcal{T}} := \mathcal{T}^{-1} \mathbb{D}^{(\kappa)} \mathcal{T}$.
In this scenario, our "Weighted Fourier Transform" generalizes naturally to a \textit{Conjugated Fourier Transform} defined by $\mathcal{F}_{\mathcal{T}} := \mathcal{F} \circ \mathcal{T}$.
Remarkably, the Spectral Mapping Theorem proved earlier holds invariantly:
\begin{equation}
    \mathcal{F}_{\mathcal{T}} (\mathfrak{D}_{\mathcal{T}} u)(\xi) = \Phi(i\xi) \mathcal{F}_{\mathcal{T}} u(\xi).
\end{equation}
Our choice of the specific weighted composition operator $\mathcal{T}u(t) = \omega(t)u(\psi(t))$ is motivated by its direct physical interpretation in modeling aging media (time-stretching) and tempered dynamics, providing a concrete realization of this abstract spectral equivalence.
\end{remark}
\section{Applications to Anomalous Relaxation, Diffusion, and Wave Dynamics}
\label{sec:applications}

In this final section, we demonstrate the versatility of the Weighted Weyl-Sonine framework by solving evolution problems that arise in complex media. We address three distinct physical regimes: generalized relaxation (diffusive), distributed-order evolution (ultra-slow transitions), and inertial propagation (oscillatory dynamics).

\begin{remark}
\label{rem:evolution_scope}
The theoretical framework developed in Section \ref{sec:weyl_sonine_framework} supports operators of arbitrary order $n$. In this section, we primarily analyze the linear evolution equation $\mathfrak{D}_{\psi, \omega}^{(\kappa)} u + \lambda u = f$. This formulation is general enough to unify the description of purely dissipative systems (relaxation/diffusion) and systems with inertial energy exchange (waves), depending on the spectral choice of the kernel $\kappa$.
\end{remark}

\subsection{Spectral Solvability of Generalized Evolution Models}

We consider the fundamental evolution equation governing the dynamics of a field $u \in \mathcal{S}_{\psi, \omega}$ (e.g., stress in a viscoelastic material, density in a porous medium, or amplitude of a wave packet) under the action of the generalized derivative.

\textbf{Problem Statement:} Find a unique solution $u(t)$ to the Cauchy-type problem on the real line:
\begin{equation} \label{eq:evolution_prob}
    \mathfrak{D}_{\psi, \omega}^{(\kappa)} u(t) + \lambda u(t) = f(t), \quad t \in \mathbb{R},
\end{equation}
where $\lambda > 0$ is a structural parameter and $f \in L^2_{\psi, \omega}(\mathbb{R})$ is a source term.
The physical nature of the solution is entirely determined by the spectral class of the kernel $\kappa$:
\begin{itemize}
    \item \textbf{Diffusive Case:} If $\kappa$ is a Bernstein function (Table \ref{tab:special_cases}, Block I), the system exhibits monotonic relaxation.
    \item \textbf{Oscillatory Case:} If $\kappa$ allows for inertial effects (e.g., Bessel kernel, Table \ref{tab:special_cases}, Block II), the system supports wave propagation.
\end{itemize}

Using the Generalized Spectral Mapping Theorem (Theorem \ref{thm:spectral_mapping}), we can characterize the solvability purely in terms of the spectral symbol $\Phi$.

\begin{theorem}[Well-posedness and Spectral Resolution]
\label{thm:well_posedness}
Let $\Phi(s) = \mathcal{L}\{\kappa\}(s)$ be the spectral symbol of the derivative kernel. Assume the following \textbf{Ellipticity Condition} holds on the imaginary axis:
\begin{equation} \label{eq:spectral_condition}
    \inf_{\xi \in \mathbb{R}} \left| i\xi \Phi(i\xi) + \lambda \right| > 0.
\end{equation}
Then, the operator $(\mathfrak{D}_{\psi, \omega}^{(\kappa)} + \lambda I)$ is a linear isomorphism from the domain $\mathcal{H}_{\psi,\omega}^{\Phi}$ onto $L^2_{\psi, \omega}(\mathbb{R})$. The unique solution is given by the weighted convolution:
\begin{equation} \label{eq:exact_sol}
    u(t) = \frac{1}{\omega(t)} \int_{-\infty}^{t} \mathcal{G}_{\lambda}( \psi(t) - \psi(\tau) ) \, f(\tau) \, \omega(\tau) \, \psi'(\tau) \, d\tau,
\end{equation}
where the Green's function $\mathcal{G}_{\lambda}$ is determined by the inverse Fourier transform of the resonant symbol:
\begin{equation} \label{eq:greens_symbol}
    \widehat{\mathcal{G}_{\lambda}}(\xi) = \frac{1}{i\xi \Phi(i\xi) + \lambda}.
\end{equation}
\end{theorem}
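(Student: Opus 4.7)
The plan is to diagonalize the operator via the Weighted Fourier Transform, invoke ellipticity to invert the resulting algebraic equation (yielding the isomorphism), and then identify the Green's kernel as a causal tempered distribution whose weighted convolution reproduces \eqref{eq:exact_sol}.

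First I would apply $\mathcal{F}_{\psi,\omega}$ to both sides of \eqref{eq:evolution_prob}. By linearity and Theorem \ref{thm:spectral_mapping} (applied with $n=1$), the equation becomes the pointwise algebraic identity $m(\xi)\,\mathcal{F}_{\psi,\omega}\{u\}(\xi) = \mathcal{F}_{\psi,\omega}\{f\}(\xi)$, where $m(\xi) := i\xi\,\Phi(i\xi) + \lambda$ is the full symbol. The ellipticity hypothesis \eqref{eq:spectral_condition} provides $|m(\xi)| \geq c > 0$ uniformly, so $1/m$ is a bounded Fourier multiplier. Injectivity of $(\mathfrak{D}_{\psi,\omega}^{(\kappa)} + \lambda I)$ is then immediate: if $m \cdot \mathcal{F}_{\psi,\omega}\{u\} \equiv 0$ then $\mathcal{F}_{\psi,\omega}\{u\} = 0$, hence $u = 0$ by injectivity of the transform. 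For surjectivity, given $f \in L^2_{\psi,\omega}(\mathbb{R})$, I would define $u$ via $\mathcal{F}_{\psi,\omega}\{u\} := \mathcal{F}_{\psi,\omega}\{f\}/m$; the weighted Plancherel identity yields $u \in L^2_{\psi,\omega}$, while $(1+|m(\xi)|^2)^{1/2}\,\mathcal{F}_{\psi,\omega}\{u\} \in L^2$ places $u$ in the Sonine--Sobolev space $\mathcal{H}_{\psi,\omega}^{\Phi}$ of Remark \ref{rem:sobolev_spaces}. Boundedness of $1/m$ then upgrades this construction to continuity of the inverse, completing the isomorphism.

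Next I would reconstruct the time-domain representation. Define $\mathcal{G}_{\lambda}$ as the tempered distribution whose classical Fourier transform is exactly $1/m(\xi)$, matching \eqref{eq:greens_symbol}. Using the structural factorization $\mathcal{F}_{\psi,\omega} = \mathcal{F} \circ Q_{\psi}^{-1} \circ M_{\omega}$, the multiplicative relation in the spectral domain translates, via the classical convolution theorem applied on the rectified variable $y = \psi(t)$, into the identity $(Q_{\psi}^{-1} M_{\omega} u)(y) = (\mathcal{G}_{\lambda} * Q_{\psi}^{-1} M_{\omega} f)(y)$. Reversing the conjugation by $Q_{\psi}$ and $M_{\omega}^{-1}$ and performing the change of variable $\tau = \psi^{-1}(y-z)$ under the convolution integral produces precisely the weighted convolution of \eqref{eq:exact_sol}, initially extended over all of $\mathbb{R}$.

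The main obstacle will be the causality of $\mathcal{G}_{\lambda}$: reducing the integration domain from $\mathbb{R}$ to $(-\infty, t]$ requires $\mathrm{supp}(\mathcal{G}_{\lambda}) \subset [0, \infty)$. I would resolve this by invoking the analyticity of $\Phi$ in the right half-plane postulated in Proposition \ref{prop:sonine_existence}, which—together with the ellipticity \eqref{eq:spectral_condition}—makes $1/m(s)$ analytic and bounded for $\mathrm{Re}(s) > 0$ with adequate decay at infinity. A Paley--Wiener--Schwartz argument for tempered causal distributions then identifies $\mathcal{G}_{\lambda}$ with the inverse Laplace transform of $1/m(s)$, automatically supported on $[0, \infty)$. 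Since $\psi$ is strictly increasing, $\mathcal{G}_{\lambda}\bigl(\psi(t) - \psi(\tau)\bigr)$ vanishes whenever $\tau > t$, so the integration domain collapses to $(-\infty, t]$, yielding \eqref{eq:exact_sol} and completing the proof.
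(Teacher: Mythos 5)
Your proposal is correct and follows essentially the same route as the paper: diagonalize via $\mathcal{F}_{\psi,\omega}$ using Theorem \ref{thm:spectral_mapping} with $n=1$, invert the bounded multiplier $m(\xi)^{-1}$ granted by the ellipticity condition, and undo the conjugation $\mathcal{F}_{\psi,\omega}^{-1} = M_{\omega}^{-1}\circ Q_{\psi}\circ \mathcal{F}^{-1}$ to obtain the weighted convolution. In fact your argument is more complete than the paper's: the Paley--Wiener step establishing $\mathrm{supp}(\mathcal{G}_{\lambda})\subset[0,\infty)$ is exactly what is needed to justify the causal integration limit $(-\infty,t]$ in \eqref{eq:exact_sol}, a point the published proof passes over in silence.
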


\begin{proof}
Applying the Weighted Fourier Transform $\mathcal{F}_{\psi, \omega}$ to Eq. \eqref{eq:evolution_prob} and utilizing Theorem \ref{thm:spectral_mapping} (with $n=1$), the differential equation converts into an algebraic one in the frequency domain. Recall that for $n=1$, the symbol of the operator is $i\xi \Phi(i\xi)$:
\begin{equation*}
    (i\xi \Phi(i\xi) + \lambda) \mathcal{F}_{\psi, \omega}\{u\}(\xi) = \mathcal{F}_{\psi, \omega}\{f\}(\xi).
\end{equation*}
Under condition \eqref{eq:spectral_condition}, the multiplier $m(\xi) = (i\xi \Phi(i\xi) + \lambda)^{-1}$ belongs to $L^\infty(\mathbb{R})$. Since the Weighted Fourier Transform is a unitary isomorphism between $L^2_{\psi, \omega}$ and $L^2$, the solution is uniquely determined in the spectral domain. The explicit time-domain representation \eqref{eq:exact_sol} follows directly from the inverse structure $\mathcal{F}_{\psi, \omega}^{-1} = M_{\omega}^{-1}\circ Q_{\psi}\circ \mathcal{F}^{-1}$.
\end{proof}


\subsection{Application: Weighted Distributed Order and Ultra-Slow Transitions}

A significant advantage of the Weyl-Sonine framework is its ability to encapsulate operators of \textit{distributed order}. These operators model complex multi-scale memory effects and "ultra-slow" diffusion processes where the mean squared displacement grows logarithmically rather than as a power law (a behavior distinct from the smooth crossover to normal diffusion depicted in Figure \ref{fig:msd}). 

\begin{figure}[htbp]
    \centering
    \includegraphics[width=0.75\textwidth]{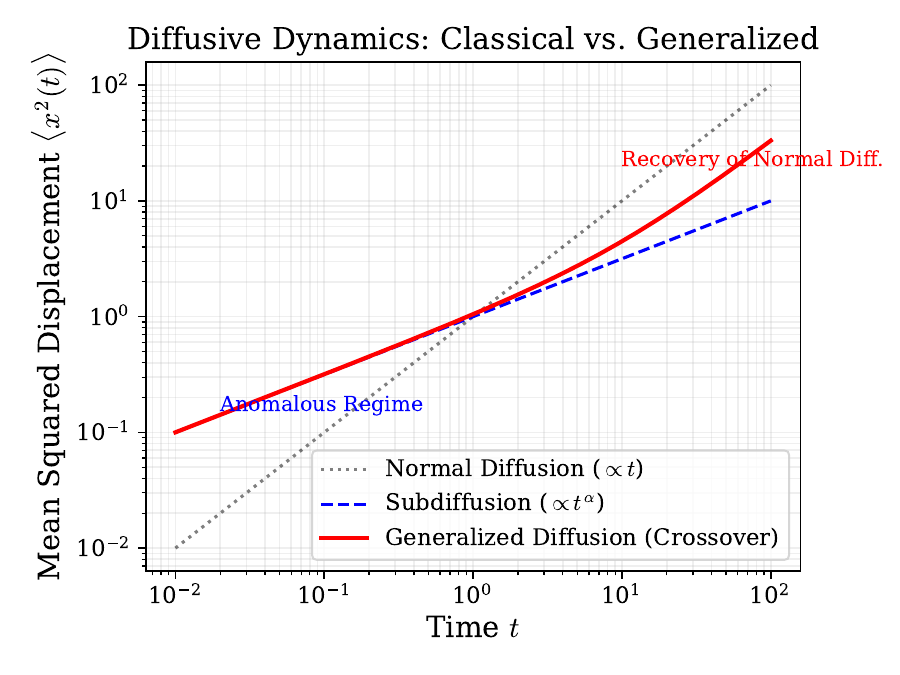}
    \caption{\textbf{Diffusive Regimes.} Mean Squared Displacement (MSD) for different memory structures. The plot illustrates the transition from subdiffusion ($\alpha < 1$) to normal diffusion ($\alpha = 1$) typical of tempered systems. Note that the \textit{retarded aging} process discussed in the text follows an ultra-slow logarithmic trajectory (flatter than the curves shown here).}
    \label{fig:msd}
\end{figure}

Our framework allows for a natural generalization of distributed order operators to time-varying domains via the weights $(\psi, \omega)$.

\begin{definition}[Weighted Distributed Order Operator]
\label{def:dist_order}
Let $b: [0,1] \to [0, \infty)$ be a probability density function representing the distribution of differentiation orders. We define the Weighted Distributed Order Operator, $\mathbb{D}_{\psi, \omega}^{b}$, as the integral superposition:
\begin{equation}
    \mathbb{D}_{\psi, \omega}^{b} u(t) := \int_0^1 b(\alpha) \, \mathfrak{D}_{\psi, \omega}^\alpha u(t) \, d\alpha.
\end{equation}
This operator belongs to the Sonine class with order $n=1$. Its associated memory kernel is $\kappa_{\text{eff}}(t) = \int_0^1 \frac{b(\alpha)}{\Gamma(1-\alpha)} t^{-\alpha} \, d\alpha$. To maintain consistency with Theorem \ref{thm:well_posedness}, we define the \textbf{Spectral Symbol} $\Phi_b(s)$ as the transform of this kernel:
\begin{equation} \label{eq:dist_symbol_def}
    \Phi_b(s) = \mathcal{L}\{\kappa_{\text{eff}}\}(s) = \int_0^1 b(\alpha) s^{\alpha-1} \, d\alpha.
\end{equation}
\end{definition}

Note that while the intuitive symbol of the operator is $\int b(\alpha) s^\alpha d\alpha$, our structural parameter $\Phi_b(s)$ includes the singular factor $s^{-1}$ inherent to the memory kernel definition. Since $s^{\alpha-1}$ is the Stieltjes transform of a measure for $\alpha \in (0,1)$, $\Phi_b(s)$ preserves the necessary analytic properties.

\subsubsection{Example: Retarded Aging via Logarithmic Diffusion}
Consider the canonical case of a uniform distribution of orders, $b(\alpha) = 1$. We compute the spectral symbol according to \eqref{eq:dist_symbol_def}:
\begin{equation}
    \Phi_{\text{uni}}(i\xi) = \int_0^1 (i\xi)^{\alpha-1} \, d\alpha = \frac{1}{i\xi} \int_0^1 (i\xi)^\alpha \, d\alpha = \frac{i\xi - 1}{i\xi \ln(i\xi)}.
\end{equation}
Now we apply Theorem \ref{thm:well_posedness}. The characteristic multiplier for the Green's function is determined by substituting $\Phi_{\text{uni}}$ into Eq. \eqref{eq:greens_symbol}:
$$
i\xi \Phi_{\text{uni}}(i\xi) + \lambda = i\xi \left( \frac{i\xi - 1}{i\xi \ln(i\xi)} \right) + \lambda = \frac{i\xi - 1}{\ln(i\xi)} + \lambda.
$$
Thus, the Green's function is determined by the inverse Fourier transform of:
\begin{equation}
    \widehat{\mathcal{G}_{\lambda}}(\xi) = \frac{\ln(i\xi)}{i\xi - 1 + \lambda \ln(i\xi)}.
\end{equation}
In classical invariant media, this symbol corresponds to ultra-slow logarithmic diffusion (see Chechkin et al. \cite{Chechkin2002}). In our weighted framework, this describes a process of \textit{retarded aging}: the system relaxes logarithmically with respect to the subjective internal time $\psi(t)$, modulated by the changing density $\omega(t)$. This model is particularly relevant for biological growth or materials with evolving microstructure where the "clock" slows down over time.

\subsection{Application: Inertial Dynamics and Wave Propagation in Expanding Media}
\label{sec:inertial_dynamics}

Finally, we address the \textit{Oscillatory Regime} identified in Remark \ref{rem:diffusive_vs_oscillatory}. Unlike the relaxation and diffusion processes discussed above, inertial dynamics (such as wave propagation) require memory kernels that allow for energy exchange rather than pure dissipation.

Consider the generalized telegrapher's equation in a time-varying medium. In our framework, this is modeled by selecting the \textbf{Bessel kernel} from Table \ref{tab:special_cases} (entry II):
\begin{equation}
    \kappa_{\text{wave}}(t) = J_0(2\sqrt{\gamma t}), \quad \gamma > 0.
\end{equation}
This kernel is \textbf{not} a Bernstein function; it is signed and oscillatory. Its spectral symbol is given by the essential singularity:
\begin{equation}
    \Phi_{\text{wave}}(s) = \mathcal{L}\{J_0(2\sqrt{\gamma t})\}(s) = \frac{1}{s} e^{-\gamma/s}.
\end{equation}

\textbf{Spectral Analysis of Oscillations:}
Using the spectral resolution provided by Theorem \ref{thm:well_posedness}, the dynamic behavior is governed by the poles of the Green's function $\mathcal{G}_\lambda$. The characteristic equation (setting the denominator of Eq. \eqref{eq:greens_symbol} to zero) becomes:
\begin{equation}
    i\xi \Phi_{\text{wave}}(i\xi) + \lambda = i\xi \left( \frac{1}{i\xi} e^{-\gamma/i\xi} \right) + \lambda = e^{i (\gamma/\xi)} + \lambda = 0.
\end{equation}
The dispersion relation $e^{i (\gamma/\xi)} + \lambda = 0$ implies that for modes on the unit spectral circle ($|\lambda|=1$), the wavenumbers $\xi$ are real. This creates a spectrum of \textbf{non-decaying propagating modes} (neutral stability), a hallmark of wave mechanics that contrasts sharply with the dissipative spectrum of parabolic operators.

\textbf{Physical Interpretation of the Weights:}
The solution $u(t)$ obtained via \eqref{eq:exact_sol} describes a wave packet. The structural functions play a geometric role:
\begin{itemize}
    \item The scale $\psi(t)$ acts as a \textit{local clock}. If $\psi(t) = t^\beta$ with $\beta > 1$ (expanding medium), the perceived frequency of the wave undergoes a redshift over time.
    \item The weight $\omega(t)$ modulates the amplitude. An exponential weight $\omega(t) = e^{\mu t}$ introduces a generalized friction or amplification term without altering the causality of the wave equation.
\end{itemize}
\begin{figure}[H]
    \centering
    \includegraphics[width=0.85\textwidth]{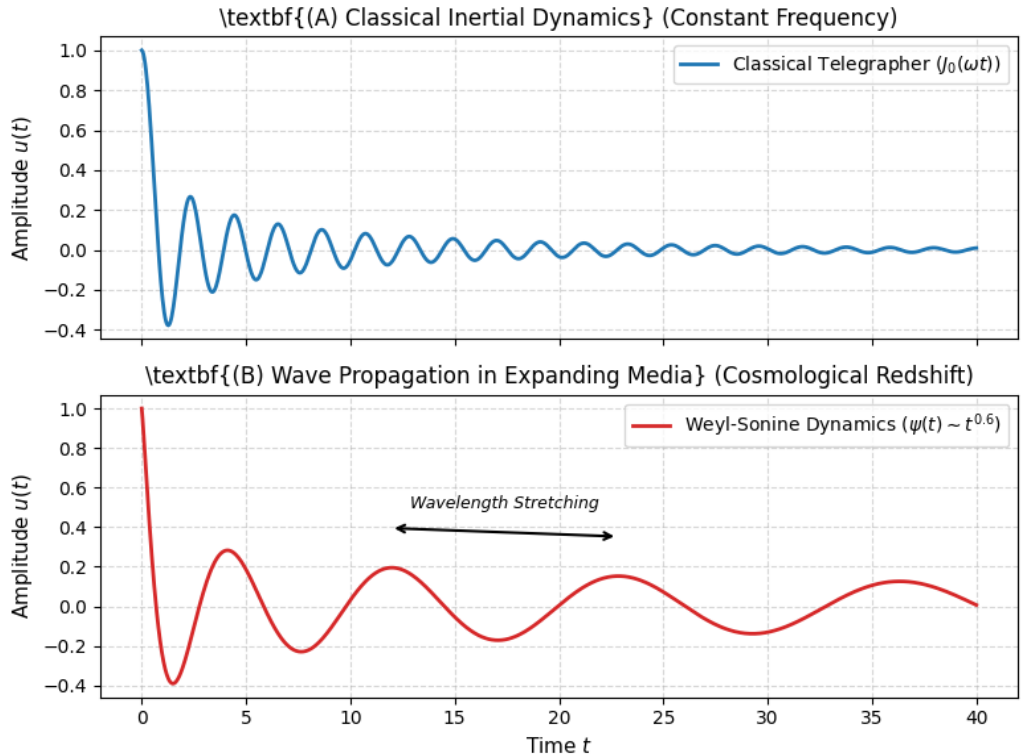}
    \caption{\textbf{Wave Propagation in Expanding Media.} 
    \textbf{(A)} Classical inertial dynamics (Telegrapher's equation) showing constant oscillation frequency. 
    \textbf{(B)} Weyl-Sonine evolution with a time-scale deformation $\psi(t) \sim t^{0.6}$. The wave packet exhibits a \textit{cosmological redshift}, where the wavelength stretches as the internal clock slows down relative to laboratory time.}
    \label{fig:redshift}
\end{figure}
This example confirms that the Weighted Weyl-Sonine framework unifies parabolic (diffusive) and hyperbolic (wave-like) phenomena under a single algebraic structure, distinguishing them solely by the spectral class of the memory kernel $\Phi$.

\section{Conclusions and Perspectives}
\label{sec:conclusions}

In this work, we have successfully constructed a unified mathematical framework that bridges the gap between General Fractional Calculus and the analysis of heterogeneous, aging media. By defining the Weighted Weyl-Sonine operators, we have extended the scope of non-local theory to the entire real line $\mathbb{R}$, allowing for memory kernels of arbitrary spectral nature—encompassing both the diffusive regime (governed by Complete Bernstein Functions) and the inertial regime (oscillatory kernels)—while simultaneously incorporating time-dependent material properties via the admissible structure pair $(\psi, \omega)$.

The core contribution of this paper is the establishment of the Generalized Spectral Mapping Theorem. We have demonstrated that, under the appropriate conjugation, the Weighted Fourier Transform acts as a unitary diagonalization map for these complex operators. This result is of significant practical value: it reduces the solution of variable-coefficient integro-differential equations to algebraic manipulations involving the spectral symbol $\Phi(i\xi)$.

The versatility of this approach was illustrated through the solvability of evolution equations across distinct physical landscapes. We derived explicit Green's functions for arbitrary admissible symbols, distinguishing between monotonic relaxation (where positivity is strictly preserved) and wave propagation (where oscillations arise from singular spectra). Additionally, we demonstrated that our framework naturally encompasses Weighted Distributed Order Operators, proving that phenomena of ultra-slow diffusion and retarded aging can be rigorously analyzed using the distributed spectral symbol $\Phi_b(\xi)$.

Furthermore, the derivation of the Marchaud-type representation (Theorem \ref{thm:marchaud_equivalence}) provides a crucial link between the algebraic operator definition and the analytical perspective of finite differences and L\'evy measures. This representation is theoretically significant as it relaxes the regularity requirements on the function space, and practically important as it opens the door to robust numerical implementations based on hypersingular quadratures.

Looking forward, several promising research avenues emerge from this study:

\begin{itemize}
    \item \textbf{Weighted Sonine-Sobolev Spaces:} As hinted in our spectral analysis, a rigorous treatment of the functional spaces $\mathcal{H}_{\psi,\omega}^{\Phi}$ is necessary. Future work will address embedding theorems and trace properties for these domains, providing the functional analytic bedrock for non-linear problems.
    
    \item \textbf{Inverse Problems:} The explicit relation between the spectral symbol $\Phi$ and the Green's function suggests the possibility of reconstructing the memory kernel or the aging weight $\omega(t)$ from spectral data. This is particularly relevant for identifying material properties in experimental rheology.
    
    \item \textbf{Numerical Schemes:} The Marchaud representation derived here lends itself naturally to the development of fast numerical schemes, avoiding the singularity issues often found in direct Riemann-Liouville discretizations of aging problems.
\end{itemize}

We believe this framework provides the necessary mathematical tools to model the next generation of complex systems where memory, heterogeneity, and aging coexist.
\clearpage
\appendix

\section{Well-posedness of Convolution Integrals}
\label{appendix:convergence}

In this appendix, we provide a rigorous justification for the convergence of the Weighted Weyl-Sonine integrals defined in Section 3, ensuring they are well-defined for kernels with both local singularities and tempered growth at infinity.

\begin{proposition}
Let $k \in L_{loc}^1(\mathbb{R})$ be a kernel satisfying the polynomial growth condition $|k(t)| \le C(1+|t|)^M$ for large $|t|$. Let $u \in \mathcal{S}_{\psi, \omega}$ be a test function in the Weighted Schwartz Space. Then, the convolution integral defining the operator converges absolutely for every $t \in \mathbb{R}$.
\end{proposition}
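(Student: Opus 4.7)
The plan is to reduce the weighted convolution to a classical one via the conjugation structure established in Section 2.3, and then split the resulting integral into a local part (controlled by $L^1_{\mathrm{loc}}$) and a tail part (controlled by Schwartz decay combined with polynomial growth). First, I would invoke Definition \ref{def:weighted_schwartz} to write $u(\tau) = f(\psi(\tau))/\omega(\tau)$ for a unique $f \in \mathcal{S}(\mathbb{R})$. Substituting this into the explicit form of $\mathfrak{I}_{\psi,\omega}^{(k)}$ and performing the change of variables $y = \psi(t)$, $s = \psi(\tau)$ (legitimate since $\psi$ is a $C^\infty$ diffeomorphism with $\psi'>0$), the integrand $\omega(\tau) u(\tau) \psi'(\tau) d\tau$ collapses to $f(s)\,ds$. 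Thus the absolute convergence reduces to showing
\begin{equation*}
\int_{-\infty}^{y} |k(y-s)|\,|f(s)|\,ds < \infty \quad \text{for every } y \in \mathbb{R}.
\end{equation*}

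Next, I would fix $y$ and split this integral at some cutoff, say at $s = y-1$, into a \emph{near-diagonal part} $\int_{y-1}^{y} |k(y-s)||f(s)|\,ds$ and a \emph{tail part} $\int_{-\infty}^{y-1} |k(y-s)||f(s)|\,ds$. For the near-diagonal part, the kernel argument ranges over $[0,1]$, where $k \in L^1_{\mathrm{loc}}$ gives $\int_0^1 |k(z)|\,dz < \infty$, and $|f(s)|$ is bounded by $\|f\|_\infty$ on $[y-1,y]$. Hence this piece is majorized by $\|f\|_\infty \|k\|_{L^1([0,1])}$. For the tail part, I would use the polynomial bound $|k(z)| \le C(1+z)^M$ for $z \ge 1$, so that
\begin{equation*}
\int_{-\infty}^{y-1} |k(y-s)|\,|f(s)|\,ds \le C \int_{-\infty}^{y-1} (1+y-s)^M |f(s)|\,ds,
\end{equation*}
and then exploit the rapid decay of $f \in \mathcal{S}(\mathbb{R})$: for any $N$ there is $C_N$ with $|f(s)|(1+|s|)^N \le C_N$. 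Choosing $N > M + 1$ gives a convergent integral, since $(1+y-s)^M (1+|s|)^{-N}$ is integrable on $(-\infty, y-1)$ by standard Peetre-type estimates.

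Finally, I would combine the two bounds to conclude absolute convergence pointwise in $t$, and briefly note that the same argument applies to the derivative kernel $\kappa$ appearing in \eqref{eq:sonine_def}, justifying differentiation under the integral sign via dominated convergence once the same tail estimate is applied to derivatives of the $\mathcal{S}(\mathbb{R})$ representative. The main obstacle I anticipate is not any single estimate, but rather the care needed in choosing the Schwartz representative $f$ in a way that quantitatively controls $u$: the statement of the proposition assumes only qualitative membership $u \in \mathcal{S}_{\psi,\omega}$, and the admissibility hypotheses on $\omega$ (no super-exponential growth, as flagged after Proposition \ref{prop:density_boundary}) must be tacitly invoked so that the decay of $f$ at infinity genuinely translates into enough decay of $\omega u$ to beat the polynomial growth $(1+|t|)^M$ of the kernel. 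Once this compatibility between $\omega$ and the growth index $M$ is made explicit, the rest of the argument is routine.
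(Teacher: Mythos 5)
Your proposal is correct and follows essentially the same route as the paper's own argument: reduce to a classical convolution $(k*f)(\psi(t))$ on the rectified line via the representation $\omega u = f\circ\psi$ with $f\in\mathcal{S}(\mathbb{R})$, then split into a near-diagonal part controlled by $k\in L^1_{\mathrm{loc}}$ and boundedness of $f$, and a tail controlled by the polynomial growth of $k$ against the rapid decay of $f$ with a Peetre-type inequality and the choice $N>M+1$. Your closing remark about needing the admissibility of $\omega$ to make the reduction quantitative is a fair observation that the paper itself only addresses implicitly, but it does not affect the validity of the argument.
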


\begin{proof}
By the structural factorization theorem, the operator is unitarily equivalent to the convolution $(k * v)(y)$ on the rectified line, where $v \in \mathcal{S}(\mathbb{R})$ is the conjugated test function. To prove convergence, we split the integral into a local singularity region and a distal tail region:
$$
|(k * v)(y)| \le \int_{|z| \le 1} |k(z)||v(y-z)| \, dz + \int_{|z| > 1} |k(z)||v(y-z)| \, dz.
$$
\textbf{1. Local Singularities:}
Since $v \in \mathcal{S}(\mathbb{R})$, it is bounded ($v \in L^\infty$). For the local part, we rely on the local integrability of $k$:
$$
\int_{|z| \le 1} |k(z)||v(y-z)| \, dz \le \|v\|_\infty \int_{-1}^{1} |k(z)| \, dz < \infty.
$$
This ensures that weak singularities (e.g., $z^{-\alpha}$ with $0 < \alpha < 1$) do not prevent convergence.

\textbf{2. Growth at Infinity:}
For the tail $|z| > 1$, we exploit the rapid decay of the Schwartz function $v$. For any $N > 0$, there exists a constant $C_N$ such that $|v(y-z)| \le C_N (1+|y-z|)^{-N}$.
We invoke the classic Peetre's inequality, which states that for any $s \in \mathbb{R}$, $(1+|a+b|)^s \le (1+|a|)^s (1+|b|)^{|s|}$. Applying this with exponent $-N$, we have:
$$
(1+|y-z|)^{-N} \le (1+|y|)^N (1+|z|)^{-N}.
$$
Substituting this bound into the integral and using the polynomial growth of $k$:
$$
\int_{|z| > 1} |k(z)||v(y-z)| \, dz \le C \cdot C_N (1+|y|)^N \int_{|z| > 1} |z|^M (1+|z|)^{-N} \, dz.
$$
Choosing $N > M+1$ ensures that the integral on the right-hand side converges. Thus, the convolution is well-defined pointwise and inherits the polynomial growth of the kernel.
\end{proof}

\section{Extension to Weighted Distributions}
\label{appendix:distributional_fourier}

In this section, we justify the application of the Weighted Fourier Transform $\mathcal{F}_{\psi, \omega}$ to the singular kernels involved in our operator definitions. This extension relies on the topological duality established in our previous work regarding the space $\mathcal{S}_{\psi, \omega}$.

\subsection{The Weighted Dual Space}
We define the space of \textbf{Weighted Tempered Distributions}, denoted by $\mathcal{S}'_{\psi, \omega}$, as the topological dual of the Weighted Schwartz Space $\mathcal{S}_{\psi, \omega}$. A linear functional $T$ belongs to $\mathcal{S}'_{\psi, \omega}$ if and only if its conjugated version $\widetilde{T} = \mathcal{C}T$ defines a classical tempered distribution in $\mathcal{S}'(\mathbb{R})$, where $\mathcal{C} = Q_{\psi}^{-1}M_{\omega}$ is the canonical isomorphism.

\subsection{Definition via Parseval's Identity}
Since $\mathcal{F}_{\psi, \omega}$ is a unitary isomorphism on the test space $\mathcal{S}_{\psi, \omega}$, we can define its action on distributions via the standard duality method (Parseval's identity).

\begin{definition}[Distributional Weighted Fourier Transform]
Let $T \in \mathcal{S}'_{\psi, \omega}$ be a weighted tempered distribution. We define its Weighted Fourier Transform, denoted by $\widehat{T} = \mathcal{F}_{\psi, \omega}[T]$, as the unique distribution satisfying:
\begin{equation}
    \langle \widehat{T}, \varphi \rangle := \langle T, \mathcal{F}_{\psi, \omega}[\varphi] \rangle, \quad \forall \varphi \in \mathcal{S}_{\psi, \omega}.
\end{equation}
\end{definition}

\begin{remark}
This definition is consistent with the Plancherel isometry on $L^2_{\psi, \omega}$. It ensures that for any convolution kernel $\kappa$ (viewed as a distribution), the spectral mapping property holds rigorously:
$$
\mathcal{F}_{\psi, \omega} \left[ \mathfrak{D}_{\psi, \omega}^{(\kappa)} u \right] = \mathcal{F}_{\psi, \omega}[\kappa] \cdot \mathcal{F}_{\psi, \omega}[u]
$$
in the sense of distributions, where the product is understood as the multiplication of a distribution by the corresponding spectral symbol (multiplier function) or via the exchange formula.
\end{remark}
\section*{Declaration of competing interest}
The author declares that he has no known competing financial interests or personal relationships that could have appeared to influence the work reported in this paper.


\section*{Declaration of Generative AI and AI-assisted technologies in the manuscript preparation process}

During the preparation of this work the author(s) used Gemini (Google) in order to improve the readability and language quality of the manuscript, as well as to assist in structuring the discussion of the physical interpretations. After using this tool/service, the author(s) reviewed and edited the content as needed and take(s) full responsibility for the content of the published article.


\end{document}